\date{\today}
\newtheorem{theorem}{Theorem}[section]
\newtheorem{proposition}[theorem]{Proposition}
\newtheorem{corollary}[theorem]{Corollary}
\newtheorem{lemma}[theorem]{Lemma}
\theoremstyle{definition}
\newtheorem{example}[theorem]{Example}
\newtheorem{remark}[theorem]{Remark}
\begin{document}

\title[On a semitopological polycyclic monoid]{On a semitopological polycyclic monoid}

\author{Serhii Bardyla and Oleg Gutik}
\address{Faculty of Mathematics, National University of Lviv,
Universytetska 1, Lviv, 79000, Ukraine}
\email{sbardyla@yahoo.com, o\underline{\hskip5pt}\,gutik@franko.lviv.ua,
ovgutik@yahoo.com}

\keywords{Inverse semigroup, bicyclic monoid, polycyclic monoid, free monoid, semigroup of matrix units, topological semigroup, semitopological semigroup, Bohr compactification, embedding, locally compact, countably compact, feebly compact.}

\subjclass[2010]{Primary 22A15, 20M18. Secondary 20M05, 22A26, 54A10, 54D30, 54D35, 54D45, 54H11}

\begin{abstract}
We study algebraic structure of the $\lambda$-polycyclic monoid $P_{\lambda}$ and its topologizations. We show that the $\lambda$-polycyclic monoid for an infinite cardinal $\lambda\geqslant 2$ has similar algebraic properties so has the polycyclic monoid $P_n$ with finitely many $n\geqslant 2$ generators. In particular we prove that for every infinite cardinal $\lambda$ the polycyclic monoid $P_{\lambda}$ is a congruence-free combinatorial $0$-bisimple $0$-$E$-unitary inverse semigroup. Also we show that every non-zero element $x$ is an isolated point in $(P_{\lambda},\tau)$ for every Hausdorff topology $\tau$ on $P_{\lambda}$, such that $(P_{\lambda},\tau)$ is a semitopological semigroup, and every locally compact Hausdorff semigroup topology on $P_\lambda$ is discrete. The last statement extends results of the paper \cite{Mesyan-Mitchell-Morayne-Peresse-20??} obtaining for topological inverse graph semigroups. We describe all feebly compact topologies $\tau$ on $P_{\lambda}$ such that $\left(P_{\lambda},\tau\right)$ is a semitopological semigroup and its Bohr compactification as a topological semigroup. We prove that for every cardinal $\lambda\geqslant 2$ any continuous homomorphism from a topological semigroup $P_\lambda$ into an arbitrary countably compact topological semigroup is annihilating and there exists no a Hausdorff feebly compact topological semigroup which contains $P_{\lambda}$ as a dense subsemigroup.
\end{abstract}

\maketitle

\section{Introduction and preliminaries}

In this paper all topological spaces will be assumed to be
Hausdorff. We shall follow the terminology of~\cite{Carruth-Hildebrant-Koch-1983-1986, Clifford-Preston-1961-1967,
Engelking-1989, Lawson-1998}. If $A$ is a subset of a topological
space $X$, then we denote the closure of the set $A$ in $X$ by
$\operatorname{cl}_X(A)$. By $\omega$ we denote the first infinite
cardinal.

A semigroup $S$ is called an \emph{inverse semigroup} if every $a$
in $S$ possesses an unique inverse, i.e. if there exists an unique
element $a^{-1}$ in $S$ such that
\begin{equation*}
    aa^{-1}a=a \qquad \mbox{and} \qquad a^{-1}aa^{-1}=a^{-1}.
\end{equation*}
A map which associates to any element of an inverse semigroup its
inverse is called the \emph{inversion}.

A \emph{band} is a semigroup of idempotents. If $S$ is a semigroup, then we shall denote the subset of all idempotents in $S$ by $E(S)$. If $S$ is an inverse semigroup, then $E(S)$ is closed under multiplication. The semigroup operation on $S$
determines the following partial order $\leqslant$ on $E(S)$: $e\leqslant f$ if and only if $ef=fe=e$. This order is called the {\em natural partial order} on $E(S)$. A \emph{semilattice} is a commutative semigroup of idempotents. A semilattice $E$ is called {\em linearly ordered} or a \emph{chain} if its natural order is a linear order. A \emph{maximal chain} of a semilattice $E$ is a chain which is properly contained in no other chain of $E$. The Axiom of Choice implies the existence of maximal chains in any partially ordered set. According to \cite[Definition~II.5.12]{Petrich-1984} chain $L$ is called $\omega$-chain if $L$ is isomorphic to $\{0,-1,-2,-3,\ldots\}$ with the usual order $\leqslant$. Let $E$ be a semilattice and $e\in E$. We denote ${\downarrow} e=\{ f\in E\mid f\leqslant e\}$
and ${\uparrow} e=\{ f\in E\mid e\leqslant f\}$.

If $S$ is a semigroup, then we shall denote by $\mathscr{R}$,
$\mathscr{L}$, $\mathscr{J}$, $\mathscr{D}$ and $\mathscr{H}$ the
Green relations on $S$ (see \cite{GreenJ1951} or \cite[Section~2.1]{Clifford-Preston-1961-1967}):
\begin{center}
\begin{tabular}{rcl}
  $a\mathscr{R}b$ & if and only if & $aS^1=bS^1$; \\
  $a\mathscr{L}b$ & if and only if & $S^1a=S^1b$; \\
  $a\mathscr{J}b$ & if and only if & $S^1aS^1=S^1bS^1$; \\
    & $\mathscr{D}=\mathscr{L}{\circ}\mathscr{R}=\mathscr{R}{\circ}\mathscr{L}$; &\\
    & $\mathscr{H}=\mathscr{L}\cap\mathscr{R}$. &\\
\end{tabular}
\end{center}

A semigroup $S$ is said to be:
\begin{itemize}
  \item[$\bullet$] \emph{simple} if $S$ has no proper two-sided ideals, which is  equivalent to $\mathscr{J}=S\times S$ in $S$;
  \item[$\bullet$] \emph{$0$-simple} if $S$ has a zero and $S$ contains no proper two-sided ideals distinct from the zero;
  \item[$\bullet$] \emph{bisimple} if $S$ contains a unique $\mathscr{D}$-class, i.e., $\mathscr{D}=S\times S$ in $S$;
  \item[$\bullet$] \emph{$0$-bisimple} if $S$ has a zero and $S$ contains two $\mathscr{D}$-classes: $\{0\}$ and $S\setminus\{0\}$;
  \item[$\bullet$] \emph{congruence-free} if $S$ has only identity and universal congruences.
\end{itemize}

An inverse semigroup $S$ is said to be
\begin{itemize}
  \item[$\bullet$] \emph{combinatorial} if $\mathscr{H}$ is the equality relation on $S$;
  \item[$\bullet$] \emph{$E$-unitary} if for any idempotents $e,f\in S$ the equality $ex=f$ implies that $x\in E(S)$;
  \item[$\bullet$] \emph{$0$-$E$-unitary} if $S$ has a zero and for any non-zero idempotents $e,f\in S$ the equality $ex=f$ implies that $x\in E(S)$.
\end{itemize}

The bicyclic monoid ${\mathscr{C}}(p,q)$ is the semigroup with the identity $1$ generated by two elements $p$ and $q$ subjected only to the condition $pq=1$. The distinct elements of ${\mathscr{C}}(p,q)$ are exhibited in the following useful array
\begin{equation*}
\begin{array}{ccccc}
  1      & p      & p^2    & p^3    & \cdots \\
  q      & qp     & qp^2   & qp^3   & \cdots \\
  q^2    & q^2p   & q^2p^2 & q^2p^3 & \cdots \\
  q^3    & q^3p   & q^3p^2 & q^3p^3 & \cdots \\
  \vdots & \vdots & \vdots & \vdots & \ddots \\
\end{array}
\end{equation*}
and the semigroup operation on ${\mathscr{C}}(p,q)$ is determined as
follows:
\begin{equation*}
    q^kp^l\cdot q^mp^n=q^{k+m-\min\{l,m\}}p^{l+n-\min\{l,m\}}.
\end{equation*}
It is well known that the bicyclic monoid ${\mathscr{C}}(p,q)$ is a bisimple (and hence simple) combinatorial $E$-unitary inverse semigroup and every non-trivial congruence on ${\mathscr{C}}(p,q)$ is a group congruence \cite{Clifford-Preston-1961-1967}. Also the nice Andersen Theorem states that \emph{a simple semigroup $S$ with an idempotent is completely simple if and only if $S$ does not contains an isomorphic copy of the bicyclic semigroup} (see \cite{Andersen-1952} and \cite[Theorem~2.54]{Clifford-Preston-1961-1967}).

Let $\lambda$ be a non-zero cardinal. On the set
 $
 B_{\lambda}=(\lambda\times\lambda)\cup\{ 0\}
 $,
where $0\notin\lambda\times\lambda$, we define the semigroup
operation ``$\, \cdot\, $'' as follows
\begin{equation*}
(a, b)\cdot(c, d)=
\left\{
  \begin{array}{cl}
    (a, d), & \hbox{ if~ } b=c;\\
    0, & \hbox{ if~ } b\neq c,
  \end{array}
\right.
\end{equation*}
and $(a, b)\cdot 0=0\cdot(a, b)=0\cdot 0=0$ for $a,b,c,d\in\lambda$. The semigroup $B_{\lambda}$ is called the \emph{semigroup of $\lambda{\times}\lambda$-matrix units}~(see \cite{Clifford-Preston-1961-1967}).

In 1970 Nivat and Perrot proposed the following generalization of the bicyclic monoid (see \cite{Nivat-Perrot-1970} and \cite[Section~9.3]{Lawson-1998}). For a non-zero cardinal $\lambda$, the polycyclic monoid $P_\lambda$ on $\lambda$ generators is the semigroup with zero given by the presentation:
\begin{equation*}
    P_\lambda=\left\langle \left\{p_i\right\}_{i\in\lambda}, \left\{p_i^{-1}\right\}_{i\in\lambda}\mid p_i p_i^{-1}=1, p_ip_j^{-1}=0 \hbox{~for~} i\neq j\right\rangle.
\end{equation*}
It is obvious that in the case when $\lambda=1$ the semigroup $P_1$ is isomorphic to the bicyclic semigroup with adjoined zero. For every finite non-zero cardinal $\lambda=n$ the polycyclic monoid $P_n$ is a congruence free, combinatorial, $0$-bisimple, $0$-$E$-unitary inverse semigroup (see \cite[Section~9.3]{Lawson-1998}).

We recall that a topological space $X$ is said to be:
\begin{itemize}
  \item \emph{compact} if each open cover of $X$ has a finite subcover;
  \item \emph{countably compact} if each open countable cover of $X$ has a finite subcover;
  \item \emph{countably compact at a subset} $A\subseteq X$ if every infinite subset $B\subseteq A$  has  an  accumulation  point $x$ in $X$;
  \item \emph{countably pracompact} if there exists a dense subset $A$ in $X$  such that $X$ is countably compact at $A$;
  \item \emph{feebly compact} if each locally finite open cover of $X$ is finite.
\end{itemize}
According to Theorem~3.10.22 of \cite{Engelking-1989}, a Tychonoff topological space $X$ is feebly compact if and only if each continuous real-valued function on $X$ is bounded, i.e., $X$ is pseudocompact. Also, a Hausdorff topological space $X$ is feebly compact if and only if every locally finite family of non-empty open subsets of $X$ is finite. Every compact space is countably compact, every countably compact space is countably pracompact, and every countably pracompact space is feebly compact (see \cite{Arkhangelskii-1992} and \cite{Engelking-1989}).

A {\it topological} ({\it inverse}) {\it semigroup} is a Hausdorff topological space together with a continuous semigroup operation (and an~inversion, respectively). Obviously, the inversion defined on a topological inverse semigroup is a homeomorphism. If $S$ is a~semigroup (an~inverse semigroup) and $\tau$ is a topology on $S$ such that $(S,\tau)$ is a topological (inverse) semigroup, then we
shall call $\tau$ a (\emph{inverse}) \emph{semigroup} \emph{topology} on $S$. A {\it semitopological semigroup} is a Hausdorff topological space together with a separately continuous semigroup operation.

 The bicyclic semigroup admits only the
discrete semigroup topology and if a topological semigroup $S$ contains it as a dense subsemigroup then
${\mathscr{C}}(p,q)$ is an open subset of $S$~\cite{Eberhart-Selden-1969}. Bertman and  West in \cite{Bertman-West-1976} extended this result for the case of semitopological semigroups. Stable and $\Gamma$-compact topological semigroups do not contain the
bicyclic semigroup~\cite{Anderson-Hunter-Koch-1965, Hildebrant-Koch-1988}. The problem of an embedding of the bicyclic monoid into compact-like topological semigroups discussed in \cite{Banakh-Dimitrova-Gutik-2009, Banakh-Dimitrova-Gutik-2010, Gutik-Repovs-2007}.
In \cite{Eberhart-Selden-1969} Eberhart and Selden proved that if the bicyclic monoid ${\mathscr{C}}(p,q)$ is a dense subsemigroup of a topological monoid $S$ and $I=S\setminus{\mathscr{C}}(p,q)\neq\varnothing$ then $I$ is a two-sided ideal of the semigroup $S$. Also, there they described the closure the bicyclic monoid ${\mathscr{C}}(p,q)$ in a locally compact topological inverse semigroup. The closure of the bicyclic monoid in a countably compact (pseudocompact) topological semigroups was studied in~\cite{Banakh-Dimitrova-Gutik-2010}.

In \cite{Fihel-Gutik-2011} Fihel and Gutik showed that any Hausdorff topology $\tau$ on the extended bicyclic semigroup ${\mathscr{C}}_{\mathbb{Z}}$ such that $({\mathscr{C}}_{\mathbb{Z}},\tau)$ is a semitopological semigroup is discrete. Also in \cite{Fihel-Gutik-2011} studied a closure of the extended bicyclic semigroup ${\mathscr{C}}_{\mathbb{Z}}$ in a topological semigroup.

For any Hausdorff topology $\tau$ on an infinite semigroup of $\lambda{\times}\lambda$-matrix units $B_\lambda$ such that $(B_\lambda,\tau)$ is a semitopological semigroup every non-zero element of $B_\lambda$ is an isolated point of $(B_\lambda,\tau)$ \cite{Gutik-Pavlyk-2005a}. Also in \cite{Gutik-Pavlyk-2005a} was proved that on any infinite semigroup of $\lambda{\times}\lambda$-matrix units $B_\lambda$ there exists a unique feebly compact topology $\tau_A$ such that $(B_\lambda,\tau_A)$ is a semitopological semigroup and moroover this topology $\tau_A$ is compact. A closure of an infinite semigroup of $\lambda{\times}\lambda$-matrix units in semitopological and topological semigroups and its embeddings into compact-like semigroups were studied in \cite{Gutik-2014, Gutik-Pavlyk-2005a, Gutik-Pavlyk-Reiter-2009}.

Semigroup topologizations and closures of inverse semigroups of monotone co-finite partial bijections of some linearly ordered infinite sets, inverse semigroups of almost identity partial bijections and  inverse semigroups of partial bijections of a bounded finite rank studied in \cite{Chuchman-Gutik-2010, Chuchman-Gutik-2011, Guran-Gutik-Ravskyj-Chuchman-2015, Gutik-Lawson-Repovs-2015, Gutik-Pavlyk-Reiter-2009, Gutik-Pozdnyakova-2014, Gutik-Reiter-2009, Gutik-Repovs-2011, Gutik-Repovs-2012}.

To every directed graph $E$ one can associate a graph inverse
semigroup $G(E)$, where elements roughly correspond to possible paths in $E$.
These semigroups generalize polycyclic monoids. In \cite{Mesyan-Mitchell-Morayne-Peresse-20??} the authors investigated topologies that turn $G(E)$ into a
topological semigroup. For instance, they showed that in any such topology that is
Hausdorff, $G(E)\setminus \{0\}$ must be discrete for any directed graph $E$.
On the other hand, $G(E)$ need not be discrete in a Hausdorff semigroup
topology, and for certain graphs $E$, $G(E)$ admits a $T_1$ semigroup topology
in which $G(E)\setminus \{0\}$ is not discrete. In \cite{Mesyan-Mitchell-Morayne-Peresse-20??} the authors also described the algebraic structure and possible cardinality of the closure of
$G(E)$ in larger topological semigroups.

In this paper we show that the $\lambda$-polycyclic monoid for in infinite cardinal $\lambda\geqslant 2$ has similar algebraic properties so has the polycyclic monoid $P_n$ with finitely many $n\geqslant 2$ generators. In particular we prove that for every infinite cardinal $\lambda$ the polycyclic monoid $P_{\lambda}$ is a congruence-free, combinatorial, $0$-bisimple, $0$-$E$-unitary inverse semigroup. Also we show that every non-zero element $x$ is an isolated point in $(P_{\lambda},\tau)$ for every Hausdorff topology on $P_{\lambda}$, such that $P_{\lambda}$ is a semitopological semigroup, and every locally compact Hausdorff semigroup topology on $P_\lambda$ is discrete. The last statement extends results of the paper \cite{Mesyan-Mitchell-Morayne-Peresse-20??} obtaining for topological inverse graph semigroups. We describe all feebly compact topologies $\tau$ on $P_{\lambda}$ such that $\left(P_{\lambda},\tau\right)$ is a semitopological semigroup and its Bohr compactification as a topological semigroup. We prove that for every cardinal $\lambda\geqslant 2$ any continuous homomorphism from a topological semigroup $P_\lambda$ into an arbitrary countably compact topological semigroup is annihilating and there exists no a Hausdorff feebly compact topological semigroup which contains $P_{\lambda}$ as a dense subsemigroup.


\section{Algebraic properties of the $\lambda$-polycyclic monoid for an infinite cardinal $\lambda$}

In this section we assume that $\lambda$ is an infinite cardinal.

We repeat the thinking and arguments from \cite[Section~9.3]{Lawson-1998}.

We shall give a representation for the polycyclic monoid $P_\lambda$ by means of partial bijections on the free monoid $\mathscr{M}_\lambda$ over the cardinal $\lambda$. Put $A=\{x_i\colon i\in\lambda\}$. Then the free monoid $\mathscr{M}_\lambda$ over the cardinal $\lambda$ is isomorphic to the free monoid $\mathscr{M}_\lambda$ over the set $A$. Next we define for every $i\in\lambda$ the partial map $\alpha\colon \mathscr{M}_\lambda\to \mathscr{M}_\lambda$ by the formula $(u)\alpha_i=x_iu$ and put that $\mathscr{M}_\lambda$ is the domain and $x_i\mathscr{M}_\lambda$ is the range of $\alpha_i$. Then for every $i\in\lambda$ we may regard so defined partial map as an element of the symmetric inverse monoid $\mathscr{I}(\mathscr{M}_\lambda)$ on the set $\mathscr{M}_\lambda$. Denote by $I_\lambda$ the inverse submonoid of $\mathscr{I}(\mathscr{M}_\lambda)$ generated by the set $\{\alpha_i\colon i\in\lambda\}$. We observe that $\alpha_i\alpha_i^{-1}$ is the identity partial map on $\mathscr{M}_\lambda$ for each $i\in\lambda$ and whereas if $i\neq j$ then $\alpha_i\alpha_j^{-1}$ is the empty partial map on the set $\mathscr{M}_\lambda$, $i,j\in\lambda$. Define the map $h\colon P_\lambda\to I_\lambda$ by the formula $(p_i)h=\alpha_i$ and $(p_i^{-1})h=\alpha_i^{-1}$, $i\in\lambda$. Then by Proposition~2.3.5 of \cite{Lawson-1998}, $I_\lambda$ is a homomorphic image of $P_\lambda$ and by Proposition~9.3.1 from \cite{Lawson-1998} the map $h\colon P_\lambda\to I_\lambda$ is an isomorphism. Since the band of the semigroup $I_\lambda$ consists of partial identity maps, the identifying the semilattice of idempotents of $I_\lambda$ with the free monoid $\mathscr{M}_\lambda^0$ with adjoined zero admits the following partial order on $\mathscr{M}_\lambda^0$:
\begin{equation}\label{eq-2.1}
    u\leqslant v \quad \hbox{if and only if} \quad v \hbox{~is a prefix of~} u \quad \hbox{for~} u,v\in \mathscr{M}_\lambda^0, \qquad \hbox{and} \qquad 0\leqslant u \quad \hbox{for every~} u\in \mathscr{M}_\lambda^0.
\end{equation}
This partial order admits the following semilattice operation on $\mathscr{M}_\lambda^0$:
\begin{equation*}
    u*v=v*u=
    \left\{
      \begin{array}{ll}
        u, & \hbox{if~} v \hbox{~is a prefix of~} u;\\
        0, & \hbox{otherwise},
      \end{array}
    \right.
\end{equation*}
and $0*u=u*0=0*0=0$ for arbitrary words $u,v\in\mathscr{M}_\lambda^0$.

\begin{remark}\label{remark-2.1}
We observe that for an arbitrary non-zero cardinal $\lambda$ the set $\mathscr{M}_\lambda^0\setminus\{0\}$ with the dual partial order to (\ref{eq-2.1}) is order isomorphic to the $\lambda$-ary tree $T_\lambda$ with the countable height.
\end{remark}

Hence, we proved the following proposition.

\begin{proposition}\label{proposition-2.2}
For every infinite cardinal $\lambda$ the semigroup $P_{\lambda}$ is isomorphic to the inverse semigroup $I_{\lambda}$ and the semilattice $E(P_{\lambda})$ is isomorphic to $(\mathscr{M}_\lambda^0,*)$.
\end{proposition}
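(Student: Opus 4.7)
The proposition is essentially a verification that the construction of the representation $h$ outlined immediately above the statement carries over from the finite-generator case of \cite{Lawson-1998} to the case of an arbitrary infinite cardinal $\lambda$. The plan consists of four steps.

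First, I would verify that the assignment $(p_i)h = \alpha_i$, $(p_i^{-1})h = \alpha_i^{-1}$ extends to a well-defined semigroup homomorphism $h \colon P_\lambda \to I_\lambda$. This amounts to checking that the defining relations of the presentation of $P_\lambda$ are satisfied in $I_\lambda$ by the partial bijections $\alpha_i$: namely, $\alpha_i\alpha_i^{-1}$ is the identity partial map on $\mathscr{M}_\lambda$, which is the identity of $I_\lambda$, while for $i\neq j$ the composition $\alpha_i\alpha_j^{-1}$ has empty domain (since in the standard right-action convention, this forces a word to begin with both $x_i$ and $x_j$), hence equals the zero of $I_\lambda$. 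Then Proposition~2.3.5 of \cite{Lawson-1998} supplies the extension. Surjectivity of $h$ is immediate, since $I_\lambda$ was defined as the inverse submonoid generated by $\{\alpha_i : i \in \lambda\}$.

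The bulk of the work is showing $h$ is injective, which is Proposition~9.3.1 of \cite{Lawson-1998}. The standard approach is a normal-form argument: every non-zero element of $P_\lambda$ is equal to one of the form $uv^{-1}$ where $u, v \in \mathscr{M}_\lambda$ are positive words in the generators $p_i$. Using the defining relations $p_ip_i^{-1}=1$ and $p_ip_j^{-1}=0$, any word in the generators can be reduced by erasing matching adjacent pairs $p_ip_i^{-1}$ and collapsing mismatched pairs $p_ip_j^{-1}$ to $0$. Injectivity then follows by showing that distinct normal forms yield distinct partial bijections: if $u = x_{i_1}\cdots x_{i_k}$ and $v = x_{j_1}\cdots x_{j_m}$, then $(uv^{-1})h$ is the partial bijection with domain $v\mathscr{M}_\lambda$ and action $vw \mapsto uw$, and such a partial bijection determines the pair $(u,v)$ uniquely. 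The key observation is that the argument is entirely local — every individual relation or reduction involves only finitely many generators — so nothing changes when $\lambda$ becomes infinite.

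Finally, for the semilattice statement, I would identify $E(P_\lambda)$ with $\mathscr{M}_\lambda^0$. Idempotents of $I_\lambda$ are precisely partial identities, and the non-zero idempotents in the image of $h$ are the maps $\alpha_{i_1}\cdots \alpha_{i_k}\alpha_{i_k}^{-1}\cdots \alpha_{i_1}^{-1}$ which act as the identity on $x_{i_1}\cdots x_{i_k}\mathscr{M}_\lambda$, together with the full identity on $\mathscr{M}_\lambda$. Associating such an idempotent with the word $x_{i_1}\cdots x_{i_k}$, the identity of $P_\lambda$ with the empty word, and the zero with $0$, one checks directly that $e_u \cdot e_v = e_u$ when $v$ is a prefix of $u$ (equivalently, $x_u\mathscr{M}_\lambda\subseteq x_v\mathscr{M}_\lambda$) and equals $0$ otherwise, which is precisely the operation $*$ defined above. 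The main obstacle is Step~3: establishing the normal-form reduction and the faithfulness of the induced action on $\mathscr{M}_\lambda$; once this is in place, the remaining steps are routine translations between idempotents in $I_\lambda$ and words in $\mathscr{M}_\lambda^0$.
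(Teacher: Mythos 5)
Your proposal is correct and follows essentially the same route as the paper: the authors likewise construct the partial bijections $\alpha_i$ on $\mathscr{M}_\lambda$, verify the defining relations, invoke Propositions~2.3.5 and~9.3.1 of Lawson's book for the homomorphism and the isomorphism, and identify $E(I_\lambda)$ with $(\mathscr{M}_\lambda^0,*)$ via the prefix order. Your additional sketch of the normal-form argument behind Proposition~9.3.1 and the remark that every reduction involves only finitely many generators is exactly the (implicit) reason the finite-$\lambda$ proof carries over.
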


Let $n$ be any positive integer and $i_1,\ldots,i_n\in\lambda$. We put
\begin{equation*}
    P_n^\lambda\left\langle i_1,\ldots,i_n\right\rangle=\left\langle p_{i_1},\ldots,p_{i_n}, p_{i_1}^{-1},\ldots,p_{i_n}^{-1} \mid p_{i_k} p_{i_k}^{-1}=1, p_{i_k}p_{i_l}^{-1}=0 \hbox{~for~} i_k\neq i_l\right\rangle.
\end{equation*}

The statement of the following lemma is trivial.

\begin{lemma}\label{lemma-2.3}
Let $\lambda$ be an infinite cardinal and $n$ be an arbitrary positive integer. Then $P_n^\lambda\left\langle i_1,\ldots,i_n\right\rangle$ is a submonoid of the polycyclic monoid $P_{\lambda}$ such that $P_n^\lambda\left\langle i_1,\ldots,i_n\right\rangle$ is isomorphic to $P_{n}$ for arbitrary $i_1,\ldots,i_n\in\lambda$.
\end{lemma}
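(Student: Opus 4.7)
The plan is to produce a surjective homomorphism $P_n\twoheadrightarrow P_n^\lambda\langle i_1,\ldots,i_n\rangle$ from the universal property of the polycyclic presentation, and then to establish its injectivity via the partial-bijection model of Proposition~\ref{proposition-2.2}.

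By definition $P_n^\lambda\langle i_1,\ldots,i_n\rangle$ is the submonoid of $P_\lambda$ generated by the set $\{p_{i_1},\ldots,p_{i_n},p_{i_1}^{-1},\ldots,p_{i_n}^{-1}\}$, so the submonoid assertion is immediate. Since these generators satisfy the relations $p_{i_k}p_{i_k}^{-1}=1$ and $p_{i_k}p_{i_l}^{-1}=0$ for $i_k\neq i_l$ inside $P_\lambda$, the universal property of the polycyclic presentation furnishes a surjective monoid homomorphism
\begin{equation*}
\Phi\colon P_n\twoheadrightarrow P_n^\lambda\langle i_1,\ldots,i_n\rangle
\end{equation*}
sending the $k$-th generator of $P_n$ to $p_{i_k}$, for $k=1,\ldots,n$.

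To show that $\Phi$ is injective, I would identify $P_\lambda$ with $I_\lambda\subseteq\mathscr{I}(\mathscr{M}_\lambda)$ via Proposition~\ref{proposition-2.2}, and let $\mathscr{M}_n^{*}\subseteq\mathscr{M}_\lambda$ denote the free submonoid on the letters $\{x_{i_1},\ldots,x_{i_n}\}$. The same normal-form reduction used in the proof of Proposition~9.3.1 of~\cite{Lawson-1998} shows that every non-zero element of the inverse submonoid of $I_\lambda$ generated by $\alpha_{i_1},\ldots,\alpha_{i_n}$ admits a unique representation of the form $\alpha_u\alpha_v^{-1}$ with $u,v\in\mathscr{M}_n^{*}$; uniqueness follows from the observation that such an element sends $vw\mapsto uw$ for all admissible $w\in\mathscr{M}_\lambda$, so the pair $(u,v)$ is recovered from the action. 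Applying Proposition~\ref{proposition-2.2} in the parallel case $\lambda=n$ gives an identical normal-form description of the elements of $P_n$, and $\Phi$ sends normal forms to normal forms in a manner compatible with both descriptions. Hence $\Phi$ is a bijection.

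The only potential obstacle is the worry that generators $p_j$ of $P_\lambda$ with $j\notin\{i_1,\ldots,i_n\}$ might induce additional collapses among words in $p_{i_1},\ldots,p_{i_n}$ and their inverses. This concern dissolves because the normal-form computation above is internal to the subsystem of $n$ chosen generators and is insensitive to the ambient cardinal $\lambda$, so no such extra relation can appear.
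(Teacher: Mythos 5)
Your argument is correct, but there is little to compare it against: the paper offers no proof of this lemma at all, introducing it with the sentence ``The statement of the following lemma is trivial.'' What you have written is the justification the authors evidently regard as implicit in the material immediately preceding the lemma: they have just set up (Proposition~\ref{proposition-2.2}, via Propositions~2.3.5 and~9.3.1 of \cite{Lawson-1998}) the faithful representation $h\colon P_\lambda\to I_\lambda\subseteq\mathscr{I}(\mathscr{M}_\lambda)$, and your injectivity argument --- unique normal forms $u^{-1}v$ with $u,v$ words over the chosen sub-alphabet, with the pair $(u,v)$ recovered from the partial bijection $vw\mapsto uw$ --- is exactly how the isomorphism is read off that representation. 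Your explicit treatment of the ``only potential obstacle'' (that generators $p_j$ with $j\notin\{i_1,\ldots,i_n\}$ might force extra collapses) is the one point of genuine content in the lemma, and the faithful representation does dispose of it. Two minor remarks. First, the universal property you invoke is that of a presentation of an inverse monoid with zero, so the surjection $\Phi$ requires that $1$ and $0$ lie in the generated submonoid; for $n\geqslant 2$ this is automatic, since $1=p_{i_1}p_{i_1}^{-1}$ and $0=p_{i_1}p_{i_2}^{-1}$. Second, for $n=1$ the submonoid generated by $\left\{p_{i_1},p_{i_1}^{-1}\right\}$ does not contain $0$, so the statement holds only under the convention that $P_1^\lambda\left\langle i_1\right\rangle$ has the zero adjoined; this ambiguity is inherited from the paper's definition (and is harmless in all its applications, where one may take $n\geqslant 2$) rather than being a defect of your proof.
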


Our above representation of the polycyclic monoid $P_\lambda$ by means of partial bijections on the free monoid $\mathscr{M}_\lambda$ over the cardinal $\lambda$ implies the following lemma.

\begin{lemma}\label{lemma-2.4}
Let $\lambda$ be an infinite cardinal. Then for any elements $x_1,\ldots,x_k\in P_{\lambda}$ there exist $i_1,\ldots,i_n\in\lambda$ such that  $x_1,\ldots,x_k\in P_n^\lambda\left\langle i_1,\ldots,i_n\right\rangle$.
\end{lemma}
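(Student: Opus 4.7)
The plan is to exploit the fact that the polycyclic monoid $P_\lambda$ is defined by a presentation, so every element is, by construction, a finite product of generators $p_i$ and $p_i^{-1}$. A single such element therefore involves only finitely many indices $i\in\lambda$, and the conclusion follows by taking the union of these finite index sets over the $k$ elements.

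More concretely, I would proceed as follows. First, for any single $x\in P_\lambda$ I would define $\operatorname{supp}(x)\subseteq\lambda$ to be the (finite) set of indices $i$ such that $p_i$ or $p_i^{-1}$ appears in some word representing $x$. Using the representation of $P_\lambda$ as $I_\lambda\subseteq\mathscr{I}(\mathscr{M}_\lambda)$ from Proposition~\ref{proposition-2.2}, this can be made canonical: every non-zero element corresponds to a partial bijection of the form $uw\mapsto vw$ (for all $w\in\mathscr{M}_\lambda$ for which this makes sense) with $u,v\in\mathscr{M}_\lambda$ uniquely determined, and $\operatorname{supp}(x)$ is simply the finite set of indices appearing in the letters of $u$ and $v$; the zero and identity have empty support. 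Then set
\[
\{i_1,\ldots,i_n\}:=\operatorname{supp}(x_1)\cup\cdots\cup\operatorname{supp}(x_k),
\]
which is a finite subset of $\lambda$. By construction each $x_j$ is a finite product of elements of $\{p_{i_1},\ldots,p_{i_n},p_{i_1}^{-1},\ldots,p_{i_n}^{-1}\}$, which are precisely the generators of $P_n^\lambda\langle i_1,\ldots,i_n\rangle$; invoking Lemma~\ref{lemma-2.3} to identify this submonoid with (a copy of) $P_n$ inside $P_\lambda$, I conclude $x_1,\ldots,x_k\in P_n^\lambda\langle i_1,\ldots,i_n\rangle$.

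There is no genuine obstacle here: the lemma is essentially a bookkeeping statement about finite generation of each element together with finiteness of the indexing set $\{1,\ldots,k\}$. The only point requiring mild care is that one should justify the use of a well-defined notion of ``indices appearing in $x$''; the cleanest way to do this is via the normal form afforded by the $I_\lambda$-representation, since then $\operatorname{supp}(x)$ is intrinsic and independent of the chosen word, making the argument completely unambiguous.
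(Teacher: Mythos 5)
Your proof is correct and follows essentially the same route as the paper, which derives the lemma directly from the representation of $P_{\lambda}$ as the semigroup $I_{\lambda}$ of partial bijections of $\mathscr{M}_{\lambda}$: each element involves only finitely many generator indices, and one takes the union over the finitely many elements. Your added care in making the support canonical via the normal form $u^{-1}v$ is a reasonable refinement but not a departure from the paper's argument.
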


\begin{theorem}\label{theorem-2.5}
For every infinite cardinal $\lambda$ the polycyclic monoid $P_{\lambda}$ is a congruence-free combinatorial $0$-bisimple $0$-$E$-unitary inverse semigroup.
\end{theorem}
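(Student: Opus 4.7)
The overall strategy is to reduce each of the five clauses of Theorem~\ref{theorem-2.5} to the corresponding assertion about the finitely-generated polycyclic monoid $P_n$, which the authors recall from \cite[Section~9.3]{Lawson-1998}. The key tool for this reduction is Lemma~\ref{lemma-2.4}, which says that any finite collection of elements of $P_\lambda$ sits inside some $P_n^\lambda\langle i_1,\ldots,i_n\rangle$, which in turn is isomorphic to $P_n$ by Lemma~\ref{lemma-2.3}. That $P_\lambda$ is an inverse semigroup is already clear from Proposition~\ref{proposition-2.2}, since $I_\lambda$ is by construction an inverse submonoid of $\mathscr{I}(\mathscr{M}_\lambda)$.

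For the combinatorial, $0$-bisimple, and $0$-$E$-unitary properties I would exploit the fact that each can be checked on only a handful of elements: in an inverse semigroup $a\mathscr{H}b$ is characterised by $a^{-1}a=b^{-1}b$ and $aa^{-1}=bb^{-1}$, the relation $a\mathscr{D}b$ needs only a single intermediate $c$ with $aa^{-1}=cc^{-1}$ and $c^{-1}c=b^{-1}b$, and the $0$-$E$-unitary condition involves only $e$, $f$ and $x$ with $ex=f$. Given such a finite witness in $P_\lambda$, I would apply Lemma~\ref{lemma-2.4} to put all the relevant elements inside a common $P_n^\lambda\langle i_1,\ldots,i_n\rangle\cong P_n$, and read off the desired conclusion from the known property of $P_n$. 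Because Green's relations in an inverse semigroup are determined by the two canonical idempotents attached to each element, they are intrinsic to an inverse subsemigroup, so no information is lost in passing to (or from) this finite submonoid.

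The congruence-free clause is the part I expect to be most delicate. Suppose $\sim$ is a congruence on $P_\lambda$ identifying two distinct elements $a\neq b$. By Lemma~\ref{lemma-2.4}, after possibly adjoining further indices to ensure $n\geqslant 2$, one can choose $i_1,\ldots,i_n\in\lambda$ with $a,b\in P_n^\lambda\langle i_1,\ldots,i_n\rangle\cong P_n$. The restriction of $\sim$ to this submonoid is a non-trivial congruence on a semigroup isomorphic to $P_n$, and is therefore the universal relation, since $P_n$ is congruence-free. In particular $0\sim 1$ inside $P_n^\lambda\langle i_1,\ldots,i_n\rangle$, and hence inside $P_\lambda$; multiplying on the left by an arbitrary $x\in P_\lambda$ yields $x=x\cdot 1\sim x\cdot 0=0$, so $\sim$ is the universal congruence on $P_\lambda$. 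The only real obstacle here is the bookkeeping that guarantees both $0$ and $1$ live in the chosen submonoid $P_n^\lambda\langle i_1,\ldots,i_n\rangle$: enlarging $n$ to at least $2$ is enough, since then $p_{i_1}p_{i_2}^{-1}=0$ already lies in that submonoid while the identity $1$ is automatic.
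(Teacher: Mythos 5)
Your proposal is correct and takes essentially the same approach as the paper: each clause is reduced via Lemmas~\ref{lemma-2.3} and~\ref{lemma-2.4} to the corresponding known property of the finitely generated polycyclic monoid $P_n$, and your congruence-free argument (restrict the congruence to the submonoid, deduce $0\,\mathfrak{C}\,1$, then multiply through) matches the paper's exactly. The only minor deviations are that the paper establishes combinatoriality from the partial-bijection representation together with Theorem~2.20 of Clifford--Preston and $0$-bisimplicity via the Munn Lemma applied to idempotents, whereas you invoke the idempotent characterizations of $\mathscr{H}$ and $\mathscr{D}$ directly; both routes work, and your remark about enlarging $n$ to at least $2$ so that $0$ lies in the chosen submonoid is a sensible precaution that the paper glosses over.
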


\begin{proof}
By Proposition~\ref{proposition-2.2} the semigroup $P_{\lambda}$ is inverse.

First we show that the semigroup $P_{\lambda}$ is $0$-bisimple. Then by the Munn Lemma (see \cite[Lemma~1.1]{Munn-1966} and \cite[Proposition~3.2.5]{Lawson-1998}) it is sufficient to show that for any two non-zero idempotents $e,f\in P_{\lambda}$ there exists $x\in P_{\lambda}$ such that $xx^{-1}=e$ and $x^{-1}x=f$. Fix arbitrary two non-zero idempotents $e,f\in P_{\lambda}$. By Lemma~\ref{lemma-2.4} there exist $i_1,\ldots,i_n\in\lambda$ such that  $e,f\in P_n^\lambda\left\langle i_1,\ldots,i_n\right\rangle$. Lemma~\ref{lemma-2.3}, Theorem~9.3.4 of \cite{Lawson-1998} and Proposition~3.2.5 of \cite{Lawson-1998} imply that there exists $x\in P_n^\lambda\left\langle i_1,\ldots,i_n\right\rangle\subset P_{\lambda}$ such that $xx^{-1}=e$ and $x^{-1}x=f$. Hence the semigroup $P_{\lambda}$ is $0$-bisimple.

The above representation of the polycyclic monoid $P_\lambda$ by means of partial bijections on the free monoid $\mathscr{M}_\lambda$ over the cardinal $\lambda$ implies that the $\mathscr{H}$-class in $P_{\lambda}$ which contains the unity is a singleton. Then since the polycyclic monoid $P_\lambda$ is $0$-bisimple Theorem 2.20 of \cite{Clifford-Preston-1961-1967} implies that every non-zero $\mathscr{H}$-class in $P_{\lambda}$ is a singleton. It is obvious that $\mathscr{H}$-class in $P_{\lambda}$ which contains zero is a singleton. This implies that the polycyclic monoid $P_\lambda$ is combinatorial.

Suppose to the contrary that the monoid $P_{\lambda}$ is not $0$-$E$-unitary. Then there exist a non-idempotent element $x\in P_{\lambda}$ and non-zero idempotents $e,f\in P_{\lambda}$ such that $xe=f$. By Lemma~\ref{lemma-2.4} there exist $i_1,\ldots,i_n\in\lambda$ such that  $x,e,f\in P_n^\lambda\left\langle i_1,\ldots,i_n\right\rangle$. Hence the monoid $P_n^\lambda\left\langle i_1,\ldots,i_n\right\rangle$ is not $0$-$E$-unitary, which contradicts Lemma~\ref{lemma-2.3} and Theorem~9.3.4 of \cite{Lawson-1998}. The obtained contradiction implies that the polycyclic monoid $P_{\lambda}$ is a $0$-$E$-unitary inverse semigroup.

Suppose the contrary that there exists a congruence $\mathfrak{C}$ on the polycyclic monoid $P_{\lambda}$ which is distinct from the identity and the universal congruence on $P_{\lambda}$. Then there exist distinct $x,y\in P_{\lambda}$ such that $x\mathfrak{C}y$. By Lemma~\ref{lemma-2.4} there exist $i_1,\ldots,i_n\in\lambda$ such that  $x,y\in P_n^\lambda\left\langle i_1,\ldots,i_n\right\rangle$. By Lemma~\ref{lemma-2.3} and Theorem~9.3.4 of \cite{Lawson-1998}, since the polycyclic monoid $P_n$ is congruence-free we have that the unity and zero of the polycyclic monoid $P_{\lambda}$ are $\mathfrak{C}$-equivalent and hence all elements of $P_{\lambda}$ are $\mathfrak{C}$-equivalent. This contradicts our assumption. The obtained contradiction implies that the polycyclic monoid $P_{\lambda}$ is a congruence-free semigroup.
\end{proof}

From now for an arbitrary cardinal $\lambda\geqslant 2$ we shall call the semigroup $P_\lambda$ the \emph{$\lambda$-polycyclic monoid}.

Fix an arbitrary cardinal $\lambda\geqslant 2$ and two distinct elements $a,b\in\lambda$. We consider the following subset $A=\{b^ia\colon i=0,1,2,3,\ldots\}$ of the free monoid $\mathscr{M}_\lambda$. The definition of the above defined partial order $\leqslant$ on $\mathscr{M}_\lambda^0$ implies that two arbitrary distinct elements of the set $A$ are incomparable in $(\mathscr{M}_\lambda^0,\leqslant)$. Let $B(b^ia)$ be a subsemigroup of $I_\lambda$ generated by the subset
\begin{equation*}
\left\{\alpha\in I_\lambda\colon \operatorname{dom}\alpha=b^ia\mathscr{M}_\lambda \hbox{~and~} \operatorname{ran}\alpha=b^ja\mathscr{M}_\lambda \hbox{~for some~} i,j\in\omega\right\}
\end{equation*}
of the semigroup $I_\lambda$. Since two arbitrary distinct elements of the set $A$ are incomparable in the partially ordered set $(\mathscr{M}_\lambda^0,\leqslant)$ the semigroup operation of $I_\lambda$ implies that the following conditions hold:
\begin{itemize}
  \item[$(i)$] $\alpha\beta$ is a non-zero element of the semigroup $I_\lambda$ if and only if $\operatorname{ran}\alpha=\operatorname{dom}\beta$;
  \item[$(ii)$] $\alpha\beta=0$ in $I_\lambda$ if and only if $\operatorname{ran}\alpha\neq\operatorname{dom}\beta$;
  \item[$(iii)$] if $\alpha\beta\neq 0$ in $I_\lambda$ then $\operatorname{dom}(\alpha\beta)=\operatorname{dom}\alpha$ and $\operatorname{ran}(\alpha\beta)=\operatorname{ran}\beta$;
  \item[$(iv)$] $B(b^ia)$ is an inverse subsemigroup of $I_\lambda$,
\end{itemize}
for arbitrary $\alpha,\beta\in B(b^ia)$.

Now, if we identify $\omega$ with the set of all non-negative integers $\{0,1,2,3,4,\ldots\}$, then simple verifications show that the map $\mathfrak{h}\colon B(b^ia)\rightarrow B_\omega$ defined in the following way:
\begin{itemize}
  \item[$(a)$] if $\alpha\neq 0$,  $\operatorname{dom}\alpha=b^ia\mathscr{M}_\lambda$ and $\operatorname{ran}\alpha=b^ja\mathscr{M}_\lambda$, then $(\alpha)\mathfrak{h}=(i,j)$, for $i,j\in\{0,1,2,3,4,\ldots\}$;
  \item[$(b)$] $(0)\mathfrak{h}=0$,
\end{itemize}
is a semigroup isomorphism.

Hence we proved the following proposition.

\begin{proposition}\label{proposition-2.6}
For every cardinal $\lambda\geqslant 2$ the $\lambda$-polycyclic monoid $P_{\lambda}$ contains an isomorphic copy of the semigroup of $\omega{\times}\omega$-matrix units $B_\omega$.
\end{proposition}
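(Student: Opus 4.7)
The plan is to follow the construction already set up immediately above the statement: fix any two distinct generators of $P_\lambda$, call them $p_a$ and $p_b$, pass to the isomorphic copy $I_\lambda$ via Proposition~\ref{proposition-2.2}, and restrict attention to the subsemigroup $B(b^ia)$ built on the antichain $A = \{b^ia : i \in \omega\}\subseteq\mathscr{M}_\lambda$. The first step is a short combinatorial observation: for $i\neq j$ the word $b^ia$ has its unique occurrence of $a$ at position $i+1$, so $b^ia$ and $b^ja$ are prefix-incomparable, and consequently the cones $b^ia\mathscr{M}_\lambda$ and $b^ja\mathscr{M}_\lambda$ are disjoint subsets of $\mathscr{M}_\lambda$. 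This is exactly the input needed for conditions $(i)$--$(iii)$ in the setup.

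Next, for every $(i,j)\in\omega\times\omega$ I exhibit a non-zero element $e_{ij}\in I_\lambda$ whose domain is $b^ia\mathscr{M}_\lambda$ and whose range is $b^ja\mathscr{M}_\lambda$, and argue that it is the only such element. Existence is by explicit construction: with the post-composition convention $(u)(\alpha\beta)=((u)\alpha)\beta$ used in the paper, the element $\alpha_b^{-i}\alpha_a^{-1}\alpha_a\alpha_b^j$ lies in $I_\lambda$ and sends $b^ia w\mapsto b^ja w$. Uniqueness comes from the fact that every non-zero element of $I_\lambda$ has the concrete form ``strip a fixed prefix $u$, prepend a fixed prefix $v$'', so it is pinned down by its domain $u\mathscr{M}_\lambda$ and range $v\mathscr{M}_\lambda$; equivalently, every non-zero $\mathscr{H}$-class of $P_\lambda$ is a singleton by Theorem~\ref{theorem-2.5}.

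With $e_{ij}$ in hand, the map $\mathfrak{h}\colon B(b^ia)\to B_\omega$ defined by $(e_{ij})\mathfrak{h}=(i,j)$ and $(0)\mathfrak{h}=0$ is automatically a bijection onto $B_\omega$. To see it is a homomorphism, I compute the product $e_{ij}\cdot e_{kl}$ using conditions $(i)$--$(iii)$ from the setup: the product is non-zero if and only if $\operatorname{ran} e_{ij}=\operatorname{dom} e_{kl}$, which by the antichain property from the first step happens exactly when $j=k$; in that case the product has domain $b^ia\mathscr{M}_\lambda$ and range $b^la\mathscr{M}_\lambda$, so by uniqueness it equals $e_{il}$. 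This matches the defining rule $(i,j)(k,l)=(i,l)$ if $j=k$ and $0$ otherwise in $B_\omega$, so $\mathfrak{h}$ preserves multiplication. The image is the whole of $B_\omega$, giving the required embedding.

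I do not expect a genuine obstacle. The one point deserving care is the uniqueness of $e_{ij}$, which needs the explicit description of $I_\lambda$ as partial bijections of prefix-substitution type rather than abstract inverse semigroup reasoning; once that is accepted, the rest is a direct dictionary between the two multiplication rules, with the antichain structure of $A$ providing exactly the orthogonality needed for matrix-unit behaviour.
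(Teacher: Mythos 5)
Your proposal is correct and follows essentially the same route as the paper, which establishes Proposition~\ref{proposition-2.6} via the construction of the antichain $A=\{b^ia\colon i\in\omega\}$, the subsemigroup $B(b^ia)$ of $I_\lambda$, and the map $\mathfrak{h}$ given in the paragraphs immediately preceding the statement. You merely make explicit what the paper leaves as ``simple verifications,'' namely the formula $e_{ij}=\alpha_b^{-i}\alpha_a^{-1}\alpha_a\alpha_b^{j}$ and the uniqueness of a non-zero element with prescribed domain and range, both of which check out.
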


\begin{proposition}\label{proposition-2.8}
For every non-zero cardinal $\lambda$ and any $\alpha,\beta\in P_{\lambda}\setminus\{0\}$, both sets
 $
\left\{\chi\in P_{\lambda}\colon
\alpha\cdot\chi=\beta\right\}
 $
 and
 $
\{\chi\in P_{\lambda}\colon
\chi\cdot\alpha=\beta\}
 $
are finite.
\end{proposition}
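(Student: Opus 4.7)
My plan is to exploit the concrete representation of $P_\lambda$ from Proposition~\ref{proposition-2.2}, under which every non-zero element is uniquely realised as a partial bijection of $\mathscr{M}_\lambda$ with domain $u\mathscr{M}_\lambda$, range $v\mathscr{M}_\lambda$ and action $uz\mapsto vz$ for a unique pair $(v,u)$ of words in $\mathscr{M}_\lambda$; I denote this element by $vu^{-1}$. Direct composition of such partial bijections yields the multiplication rule
\[
(v_1u_1^{-1})\cdot(v_2u_2^{-1})=
\begin{cases}
 v_2w\,u_1^{-1}, & \text{if } v_1=u_2w \text{ for some } w\in\mathscr{M}_\lambda,\\
 v_2\,(u_1w)^{-1}, & \text{if } u_2=v_1w \text{ for some } w\in\mathscr{M}_\lambda,\\
 0, & \text{otherwise,}
\end{cases}
\]
the two non-zero branches agreeing when $w$ is empty. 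Finiteness of the two target sets will follow by case-analysis on this rule.

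Fix $\alpha=v_1u_1^{-1}$ and $\beta=vu^{-1}$ with both non-zero, and let $\chi=v_2u_2^{-1}$ satisfy $\alpha\chi=\beta$. If the first branch of the rule applies, $v_1=u_2w$ and $\alpha\chi=v_2wu_1^{-1}=vu^{-1}$, which forces $u_1=u$ together with $v_2w=v$ and $u_2w=v_1$; so $w$ must be a common suffix of $v_1$ and $v$, and each such $w$ determines $v_2$ and $u_2$ uniquely. Since the set of suffixes of a finite word is finite, this branch contributes at most $\min\{|v_1|,|v|\}+1$ values of $\chi$, and only when $u_1=u$. If the second branch applies, then $u_2=v_1w$ with $w$ non-empty and $\alpha\chi=v_2(u_1w)^{-1}=vu^{-1}$; this forces $v_2=v$ and $u_1w=u$, so $u_1$ must be a proper prefix of $u$, whence $w$, and therefore $u_2$, is uniquely determined. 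The two branches impose mutually exclusive conditions on the pair $(u_1,u)$, so $\lvert\{\chi\colon \alpha\chi=\beta\}\rvert\leqslant\min\{|v_1|,|v|\}+1$, which is finite.

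The companion set $\{\chi\colon\chi\alpha=\beta\}$ reduces to this one via the involutive inversion of $P_\lambda$: since $\chi\alpha=\beta$ is equivalent to $\alpha^{-1}\chi^{-1}=\beta^{-1}$, the map $\chi\mapsto\chi^{-1}$ is a bijection from $\{\chi\colon\chi\alpha=\beta\}$ onto $\{\eta\in P_\lambda\colon\alpha^{-1}\eta=\beta^{-1}\}$, which is finite by the previous step. The main obstacle is purely bookkeeping: writing the product rule accurately in terms of word pairs, treating the empty-suffix overlap between the two non-zero branches so that nothing is double-counted or missed, and translating the inversion argument cleanly; no ingredient beyond the word-level multiplication rule is required.
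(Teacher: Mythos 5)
Your argument is correct. It rests on the same foundation as the paper's own proof --- the faithful representation of $P_\lambda$ by partial bijections $u\mathscr{M}_\lambda\to v\mathscr{M}_\lambda$, $uz\mapsto vz$, together with the fact that a finite word has only finitely many prefixes (equivalently, suffixes) --- but the execution is genuinely different. You solve $\alpha\cdot\chi=\beta$ head-on: you write the product rule at the level of word pairs, split into the two overlap cases, and parametrise the solutions by common suffixes, which even yields the explicit bound $\min\{|v_1|,|v|\}+1$; you then dispose of the left-sided equation $\chi\cdot\alpha=\beta$ via the inversion bijection $\chi\mapsto\chi^{-1}$. The paper instead multiplies the equation on the left by $\alpha^{-1}$, embeds the solution set into $\left\{\chi\colon \alpha^{-1}\alpha\chi=\alpha^{-1}\beta\right\}$, and observes that any such $\chi$ must extend the fixed partial bijection $\alpha^{-1}\beta\colon u\mathscr{M}_\lambda\to v\mathscr{M}_\lambda$, so that $\chi$ is of the form $u_1\mathscr{M}_\lambda\to v_1\mathscr{M}_\lambda$ with $u_1,v_1$ prefixes of $u,v$ --- finitely many possibilities; the left-sided case is declared ``similar''. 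Your route buys an explicit count and a clean, uniform treatment of the second set; the paper's buys a shorter computation by exploiting the inverse-semigroup reduction to extensions of a single fixed partial map. The only points worth watching in your write-up are the ones you already flag: the uniqueness of the normal form $vu^{-1}$ (Proposition~9.3.1 of \cite{Lawson-1998}) is what justifies equating the word components, and the empty-suffix overlap between the two branches must be assigned to exactly one case, as you do. Both proofs are complete.
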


\begin{proof}
We show that the set $\left\{\chi\in P_{\lambda}\colon \alpha\cdot\chi=\beta\right\}$ is finite. The proof in other case is similar.

It is obvious that
\begin{equation*}
\left\{\chi\in P_{\lambda}\colon \alpha\cdot\chi=\beta\right\}\subseteq \left\{\chi\in P_{\lambda}\colon \alpha^{-1}\cdot\alpha\cdot\chi=\alpha^{-1}\cdot\beta\right\}.
\end{equation*}
Then the definition of the semigroup $I_\lambda$ implies there exist words $u,v\in\mathscr{M}_\lambda$ such that the partial map $\alpha^{-1}\cdot\beta$ is the map from $u\mathscr{M}_\lambda$ onto $v\mathscr{M}_\lambda$ defined by the formula $(ux)(\alpha^{-1}\cdot\beta)=vx$ for any $x\in\mathscr{M}_\lambda$. Since $\alpha^{-1}\cdot\alpha$ is an identity partial map of $\mathscr{M}_\lambda$ we get that the partial map $\alpha^{-1}\cdot\beta$ is a restriction of the partial map $\chi$ on the set $\operatorname{dom}(\alpha^{-1}\cdot\alpha)$. Hence by the definition of the semigroup $I_\lambda$ there exists words $u_1,v_1\in\mathscr{M}_\lambda$ such that $u_1$ is a prefix of $u$, $v_1$ is a prefix of $v$ and $\chi$ is the map from $u_1\mathscr{M}_\lambda$ onto $v_1\mathscr{M}_\lambda$ defined by the formula $(u_1x)(\alpha^{-1}\cdot\beta)=v_1x$ for any $x\in\mathscr{M}_\lambda$. Now, since every word of free monoid $\mathscr{M}_\lambda$ has finitely many prefixes we conclude that the set $\left\{\chi\in P_{\lambda}\colon \alpha^{-1}\cdot\alpha\cdot\chi=\alpha^{-1}\cdot\beta\right\}$ is finite, and hence so is $\left\{\chi\in P_{\lambda}\colon \alpha\cdot\chi=\beta\right\}$.
\end{proof}

Later we need the following lemma.

\begin{lemma}\label{lemma-2.9}
Let $\lambda$ be any cardinal $\geqslant 2$. Then an element $x$ of the $\lambda$-polycyclic monoid $P_{\lambda}$ is $\mathscr{R}$-equivalent to the identity $1$ of $P_{\lambda}$ if and only if $x=p_{i_1}\ldots p_{i_n}$ for some generators $p_{i_1},\ldots,p_{i_n}\in\left\{p_i\right\}_{i\in\lambda}$.
\end{lemma}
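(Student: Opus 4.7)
The plan is to pass to the concrete representation $P_\lambda \cong I_\lambda$ from Proposition~\ref{proposition-2.2} and rephrase $\mathscr{R}$-equivalence with $1$ in terms of domains of partial bijections on the free monoid $\mathscr{M}_\lambda$. Since $P_\lambda$ is inverse, Green's relation on an inverse semigroup gives $x \mathrel{\mathscr{R}} 1$ if and only if $xx^{-1} = 1\cdot 1^{-1} = 1$. Under the isomorphism $h\colon P_\lambda\to I_\lambda$, the idempotent $xx^{-1}$ corresponds to the partial identity on $\operatorname{dom}((x)h)\subseteq\mathscr{M}_\lambda$, while $1\in P_\lambda$ corresponds to the identity on all of $\mathscr{M}_\lambda$. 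Hence it suffices to show that the non-zero elements $\beta\in I_\lambda$ with $\operatorname{dom}\beta = \mathscr{M}_\lambda$ are precisely the products $\alpha_{i_1}\cdots\alpha_{i_n}$ of the generating maps (with the empty product interpreted as the identity).

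Sufficiency is immediate from $(u)\alpha_i = x_iu$: by induction on $n$,
\[
(u)(\alpha_{i_1}\cdots\alpha_{i_n}) = x_{i_n}x_{i_{n-1}}\cdots x_{i_1}u
\]
for every $u\in\mathscr{M}_\lambda$, so the composition has domain $\mathscr{M}_\lambda$, and pulling back through $h$ yields $p_{i_1}\cdots p_{i_n} \mathrel{\mathscr{R}} 1$.

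For the necessity direction I would first establish the following shift normal form for $I_\lambda$, by induction on the length of an expression of a non-zero $\beta\in I_\lambda$ as a word in the generators $\alpha_i$ and $\alpha_i^{-1}$: there exist unique $u,v\in\mathscr{M}_\lambda$ such that $\operatorname{dom}\beta = u\mathscr{M}_\lambda$, $\operatorname{ran}\beta = v\mathscr{M}_\lambda$, and $(uw)\beta = vw$ for every $w\in\mathscr{M}_\lambda$. The inductive step tracks how $u$ and $v$ transform under right multiplication by a generator: $\cdot\alpha_i$ prepends $x_i$ on the left of $v$; $\cdot\alpha_i^{-1}$ strips a leading $x_i$ from $v$ when $v$ starts with $x_i$, appends $x_i$ on the right of $u$ when $v$ is empty, and produces $0$ when $v$ starts with a different letter. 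Granting this form, the hypothesis $\operatorname{dom}\beta = \mathscr{M}_\lambda$ forces $u$ to be the empty word, so $\beta$ acts as $w\mapsto vw$. Writing $v = x_{i_n}\cdots x_{i_1}$ then identifies $\beta = \alpha_{i_1}\cdots\alpha_{i_n}$, whence $x = p_{i_1}\cdots p_{i_n}$ under $h$ (the case $v = \varnothing$ giving $x = 1$).

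The only slightly delicate point is the shift normal form invoked in the necessity half: it is not isolated as a numbered lemma in the preceding text, so a short inductive argument is required (or one may cite the polycyclic normal form results of Section~9.3 of \cite{Lawson-1998}). Once the normal form is in hand, reading off the indices $i_1,\ldots,i_n$ from the domain condition $\operatorname{dom}\beta = \mathscr{M}_\lambda$ is completely routine.
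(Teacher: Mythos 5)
Your proposal is correct and takes essentially the same route as the paper: both reduce $x\mathrel{\mathscr{R}}1$ to $xx^{-1}=1$ and rest on the unique normal form $x=u^{-1}v$ (Proposition~9.3.1 of Lawson, equivalently your ``shift normal form'' $uw\mapsto vw$), concluding that $u$ must be the empty word. The only difference is presentational: the paper verifies $u=\varnothing$ by the algebraic computation $xx^{-1}=a_k^{-1}\cdots a_1^{-1}a_1\cdots a_k\neq 1$ inside a finitely generated submonoid $P_n^\lambda\langle i_1,\ldots,i_n\rangle$, whereas you read the same fact off the condition $\operatorname{dom}((x)h)=\mathscr{M}_\lambda$ in the partial-bijection representation.
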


\begin{proof}
We observe that the definition of the $\mathscr{R}$-relation implies that $x\mathscr{R}1$ if and only if $xx^{-1}=1$ (see \cite[Section~3.2]{Lawson-1998}).

$(\Rightarrow)$ Suppose that an element $x$ of $P_{\lambda}$ has a form $x=p_{i_1}\ldots p_{i_n}$. Then the definition of the $\lambda$-polycyclic monoid $P_{\lambda}$ implies that
\begin{equation*}
    xx^{-1}=\left(p_{i_1}\ldots p_{i_n}\right)\left(p_{i_1}\ldots p_{i_n}\right)^{-1}=p_{i_1}\ldots p_{i_n}p_{i_n}^{-1}\ldots p_{i_1}^{-1}=1,
\end{equation*}
and hence $x\mathscr{R}1$.

$(\Leftarrow)$ Suppose that some element $x$ of the $\lambda$-polycyclic monoid $P_{\lambda}$ is $\mathscr{R}$-equivalent to the identity $1$ of $P_{\lambda}$. Then the definition of the semigroup $P_{\lambda}$ implies that there exist finitely many $p_{i_1},\ldots,p_{i_n}\in\left\{p_i\right\}_{i\in\lambda}$ such that $x$ is an element of the submonoid $P_n^\lambda\left\langle i_1,\ldots,i_n\right\rangle$ of $P_{\lambda}$, which is generated by elements $p_{i_1},\ldots,p_{i_n}$, i.e.,
\begin{equation*}
    P_n^\lambda\left\langle i_1,\ldots,i_n\right\rangle=\left\langle p_{i_1},\ldots,p_{i_n}, p_{i_1}^{-1},\ldots,p_{i_n}^{-1}\colon p_{i_k}p_{i_k}^{-1}=1, \; p_{i_k}p_{i_l}^{-1}=0 \; \hbox{~for~} \; i_k\neq i_l \right\rangle.
\end{equation*}
Proposition~9.3.1 of \cite{Lawson-1998} implies that the element $x$ is equal to the unique string of the form $u^{-1}v$, where $u$ and $v$ are strings of the free monoid $\mathscr{M}_{\{p_{i_1},\ldots,p_{i_n}\}}$ over the set $\left\{p_{i_1},\ldots,p_{i_n}\right\}$. Next we shall show that $u$ is the empty string of $\mathscr{M}_{\{p_{i_1},\ldots,p_{i_n}\}}$. Suppose that $u=a_1\ldots a_k$ and $v=b_1\ldots b_l$, for some $a_1,\ldots,a_k,b_1,\ldots,b_l\in\left\{p_{i_1},\ldots,p_{i_n}\right\}$ and $u$ is not the empty-string of $\mathscr{M}_{\{p_{i_1},\ldots,p_{i_n}\}}$. Then the definition of the $\lambda$-polycyclic monoid $P_{\lambda}$ implies that
\begin{equation*}
\begin{split}
  xx^{-1} & =\left(u^{-1}v\right)\left(u^{-1}v\right)^{-1}=u^{-1}vv^{-1}u= \\
          & =\left(a_1\ldots a_k\right)^{-1}\left(b_1\ldots b_l\right)\left(b_1\ldots b_l\right)^{-1}\left(a_1\ldots a_k\right)= \\
          & =a_k^{-1}\ldots a_1^{-1}b_1\ldots b_lb_l^{-1}\ldots b_1^{-1}a_1\ldots a_k=\\
          & =\ldots=\\
          & =a_k^{-1}\ldots a_1^{-1}1a_1\ldots a_k=
          \\
          & =a_k^{-1}\ldots a_1^{-1}a_1\ldots a_k\neq 1,
\end{split}
\end{equation*}
which contradicts the assumption that $x\mathscr{R}1$. The obtained contradiction implies that the element $x$ has the form $x=p_{i_1}\ldots p_{i_n}$ for some generators $p_{i_1},\ldots,p_{i_n}$ from the set $\left\{p_i\right\}_{i\in\lambda}$.
\end{proof}

\section{On semigroup topologizations of the $\lambda$-polycyclic monoid}

In \cite{Eberhart-Selden-1969} Eberhart and Selden proved that if $\tau$ is a Hausdorff topology on the bicyclic monoid ${\mathscr{C}}(p,q)$ such that $({\mathscr{C}}(p,q),\tau)$ is a topological semigroup then $\tau$ is discrete. In \cite{Bertman-West-1976} Bertman and West extended this results for the case when $({\mathscr{C}}(p,q),\tau)$ is a Hausdorff semitopological semigroup. In \cite{Mesyan-Mitchell-Morayne-Peresse-20??} there proved that for any positive integer $n>1$ every non-zero element in a Hausdorff topological $n$-polycyclic monoid $P_n$ is an isolated point. The following proposition generalizes the above results.

\begin{proposition}\label{proposition-3.1}
Let $\lambda$ be any cardinal $\geqslant 2$ and $\tau$ be any Hausdorff topology on $P_{\lambda}$, such that $P_{\lambda}$ is a semitopological semigroup. Then every non-zero element $x$ is an isolated point in $(P_{\lambda},\tau)$.
\end{proposition}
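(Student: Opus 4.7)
The plan is to argue in two stages: first show that the identity $1 \in P_\lambda$ is an isolated point of $(P_\lambda,\tau)$, then propagate isolation to every non-zero element via a continuous reduction to $1$.

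\emph{Stage 1 (isolation of $1$).} Suppose toward contradiction there exists a net $(y_\sigma)$ in $P_\lambda \setminus \{1\}$ with $y_\sigma \to 1$. Since $\tau$ is $T_1$ and $0 \neq 1$, I may pass to a tail and assume each $y_\sigma$ is non-zero, so by the partial-bijection representation of Proposition~\ref{proposition-2.2} one has $y_\sigma = u_\sigma^{-1} v_\sigma$ for words $u_\sigma, v_\sigma \in \mathscr{M}_\lambda$ with $(u_\sigma, v_\sigma) \neq (\emptyset, \emptyset)$. Fix two distinct $i, j \in \lambda$ (possible since $\lambda \geq 2$). Separate continuity yields $p_k y_\sigma \to p_k$ for $k \in \{i, j\}$, and a direct check in the representation shows $p_k \cdot (u^{-1} v) = 0$ whenever $u \neq \emptyset$ does not begin with the letter $x_k$. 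Hausdorff separation of $0$ from $p_k$ then forces $u_\sigma = \emptyset$ eventually, since no word begins with both $x_i$ and $x_j$. Hence eventually $y_\sigma = v_\sigma$ with $v_\sigma$ a non-empty word, and the symmetric argument applied to $y_\sigma p_k^{-1} \to p_k^{-1}$ forces $v_\sigma$ to eventually begin with both $x_i$ and $x_j$---contradiction. Thus $\{1\}$ is open.

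\emph{Stage 2 (propagation).} Fix a non-zero $x = u^{-1} v \in P_\lambda$, identify the words $u$ and $v$ also with elements of $P_\lambda$ (as full-domain maps $z \mapsto uz$ and $z \mapsto vz$), and define
\begin{equation*}
\phi : P_\lambda \to P_\lambda, \qquad \phi(y) = u \cdot y \cdot v^{-1}.
\end{equation*}
Since each of $L_u$ and $R_{v^{-1}}$ is continuous by separate continuity, $\phi$ is continuous. Using $u u^{-1} = 1 = v v^{-1}$, one computes $\phi(x) = u \cdot u^{-1} v \cdot v^{-1} = 1$. Proposition~\ref{proposition-2.8} applied twice---first to $z \cdot v^{-1} = 1$ (finitely many $z$, since $v^{-1} \neq 0$), then to $u \cdot y = z$ (finitely many $y$ for each such $z$)---shows $\phi^{-1}(\{1\})$ is finite. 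By Stage~1 the set $\{1\}$ is open, so $\phi^{-1}(\{1\})$ is a finite open set containing $x$. Since $\tau$ is $T_1$, the finite subset $\phi^{-1}(\{1\}) \setminus \{x\}$ is closed; therefore $\{x\}$ equals $\phi^{-1}(\{1\})$ minus this closed set, and is open.

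The substantive difficulty is Stage~1: the key computation is the annihilation identity $p_k \cdot (u^{-1} v) = 0$ when $u \neq \emptyset$ does not begin with $x_k$, which allows two distinct generators to jointly read off the leading letter of $u_\sigma$ and force $u_\sigma = \emptyset$, and then the analogous right-sided argument forces $v_\sigma = \emptyset$. Stage~2 is then routine, combining Proposition~\ref{proposition-2.8} with $T_1$-separation.
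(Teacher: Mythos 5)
Your proof is correct, and it shares the paper's overall skeleton --- first isolate the identity, then transport isolation to every non-zero element --- but both halves are executed by genuinely different means. For the key step (isolating $1$), the paper fixes the idempotent $\varepsilon$ given by the identity partial map on $x\mathscr{M}_\lambda$ for a single letter $x$, observes that $\varepsilon P_\lambda\cup P_\lambda\varepsilon$ is closed (translations by an idempotent are retractions), and derives a contradiction from the fact that an infinite neighbourhood of $1$ avoiding this set must contain some $\alpha$ whose domain or range idempotent is incomparable with $\varepsilon$, whence $\varepsilon\cdot\alpha=0$ or $\alpha\cdot\varepsilon=0$ falls into a neighbourhood of $\varepsilon$ chosen to miss $0$. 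Your net argument instead multiplies by two distinct generators $p_i,p_j$ and reads off the normal form $u^{-1}v$: since at most one generator index $k$ can make $p_k\cdot(u^{-1}v)$ (respectively $(u^{-1}v)\cdot p_k^{-1}$) non-zero when $u$ (respectively $v$) is non-empty, Hausdorff separation of $0$ from $p_i,p_j,p_i^{-1},p_j^{-1}$ forces both words to vanish eventually, contradicting $y_\sigma\neq 1$. This is more elementary and makes the role of $\lambda\geqslant 2$ completely explicit, whereas the paper's version leans on order-theoretic facts about $E(P_\lambda)$ (maximality of $\varepsilon$ in $E(P_\lambda)\setminus\{1\}$ and combinatoriality). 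For the transport step the paper simply invokes $0$-simplicity together with Proposition~\ref{proposition-2.8} and continuity of translations; your map $\phi(y)=u\cdot y\cdot v^{-1}$, with $\phi(x)=1$ and $\phi^{-1}(\{1\})$ finite by two applications of Proposition~\ref{proposition-2.8}, is an explicit and fully detailed instance of the same idea. One cosmetic caveat: whether the relevant letter of $u_\sigma$ is its first or its last depends on the convention used to expand $u^{-1}$ as a product of the $p_j^{-1}$; what your argument actually uses --- that for non-empty $u$ at most one index $k$ gives $p_k\cdot(u^{-1}v)\neq 0$ --- is correct under either convention.
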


\begin{proof}
We observe that the $\lambda$-polycyclic monoid $P_{\lambda}$ is a $0$-bisimple semigroup, and hence is a $0$-simple semigroup. Then the continuity of right and left translations in $(P_{\lambda},\tau)$ and Proposition~\ref{proposition-2.8} imply that it is complete to show that there exists an non-zero element $x$ of $P_{\lambda}$ such that $x$ is an isolated point in the topological space $(P_{\lambda},\tau)$.

Suppose to the contrary that the unit $1$ of the $\lambda$-polycyclic monoid $P_{\lambda}$ is a non-isolated point of the topological space $(P_{\lambda},\tau)$. Then every open neighbourhood $U(1)$ of $1$ in $(P_{\lambda},\tau)$ is infinite subset.

Fix a singleton word $x$ in the free monoid $\mathscr{M}_\lambda$. Let $\varepsilon$ be an idempotent of the $\lambda$-polycyclic monoid $P_{\lambda}$ which corresponds to the identity partial map of $x\mathscr{M}_\lambda$. Since left and right translation on the idempotent $\varepsilon$ are retractions of the topological space $(P_{\lambda},\tau)$ the Hausdorffness of $(P_{\lambda},\tau)$ implies that $\varepsilon P_{\lambda}$ and $P_{\lambda}\varepsilon$ are closed subsets of the topological space $(P_{\lambda},\tau)$, and hence so is the set $\varepsilon P_{\lambda}\cup P_{\lambda}\varepsilon$. The separate continuity of the semigroup operation and Hausdorffness of $(P_{\lambda},\tau)$ imply that for every open neighbourhood $U(\varepsilon)\not\ni 0$ of the point $\varepsilon$ in $(P_{\lambda},\tau)$ there exists an open neighbourhood $U(1)$ of the unit $1$ in $(P_{\lambda},\tau)$ such that
\begin{equation*}
U(1)\subseteq P_{\lambda}\setminus(\varepsilon P_{\lambda}\cup P_{\lambda}\varepsilon), \qquad \varepsilon\cdot U(1)\subseteq U(\varepsilon) \qquad \hbox{and} \qquad U(1)\cdot\varepsilon\subseteq U(\varepsilon).
\end{equation*}
We observe that the idempotent $\varepsilon$ is maximal in $P_{\lambda}\setminus\{1\}$. Hence any other idempotent $\iota\in P_{\lambda}\setminus(\varepsilon P_{\lambda}\cup P_{\lambda}\varepsilon)$ is incomparable with $\varepsilon$. Since the set $U(1)$ is infinite there exists an element $\alpha\in U(1)$ such that either $\alpha\cdot\alpha^{-1}$ or $\alpha^{-1}\cdot\alpha$ is an incomparable idempotent with $\varepsilon$. Then we get that either
\begin{equation*}
\varepsilon\cdot\alpha= \varepsilon\cdot(\alpha\cdot\alpha^{-1}\cdot\alpha)= (\varepsilon\cdot\alpha\cdot\alpha^{-1})\cdot\alpha= 0\cdot\alpha=0\in U(\varepsilon)
\end{equation*}
or
\begin{equation*}
\alpha\cdot\varepsilon=(\alpha\cdot\alpha^{-1}\cdot\alpha)\cdot \varepsilon= \alpha\cdot(\alpha^{-1}\cdot\alpha\cdot\varepsilon)=\alpha\cdot 0=0\in U(\varepsilon).
\end{equation*}
The obtained contradiction implies that the unit $1$ is an isolated point of the topological space $(P_{\lambda},\tau)$, which completes the proof of our proposition.
\end{proof}

A topological space $X$ is called \emph{collectionwise normal} if $X$ is $T_1$-space and for every discrete family $\left\{F_\alpha\right\}_{\alpha\in\mathscr{J}}$ of closed subsets of $X$ there exists a discrete family $\left\{S_\alpha\right\}_{\alpha\in\mathscr{J}}$ of open subsets of $X$ such that $F_\alpha\subseteq S_\alpha$ for every $\alpha\in\mathscr{J}$ \cite{Engelking-1989}.

\begin{proposition}\label{proposition-3.2}
Every Hausdorff topological space $X$ with a unique non-isoloated point is collectionwise normal.
\end{proposition}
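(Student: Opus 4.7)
The plan is to exploit the very restricted structure of such spaces: let $x_0$ be the unique non-isolated point of $X$. Since $X$ is Hausdorff it is $T_1$, so the $T_1$ condition of collectionwise normality is automatic, and I only need to separate a given discrete family $\{F_\alpha\}_{\alpha\in\mathscr{J}}$ of closed sets by a discrete family of open sets.

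The key preliminary observation I will make is that \emph{every subset of} $X\setminus\{x_0\}$ \emph{is open}, because a union of isolated singletons is open. In particular, any $F_\alpha$ which does not contain $x_0$ is already clopen. Because $\{F_\alpha\}$ is discrete it is pairwise disjoint, so at most one index, call it $\alpha_0$, has $x_0\in F_{\alpha_0}$; if no such index exists I simply take $S_\alpha:=F_\alpha$ for every $\alpha$, and the discreteness of $\{S_\alpha\}$ is inherited from that of $\{F_\alpha\}$.

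In the remaining case I set $S_\alpha:=F_\alpha$ for all $\alpha\neq\alpha_0$ (these are already open by the observation above) and
\begin{equation*}
S_{\alpha_0}:=X\setminus\bigcup_{\alpha\neq\alpha_0}F_\alpha.
\end{equation*}
The set $\bigcup_{\alpha\neq\alpha_0}F_\alpha$ is closed because any subfamily of a discrete family of closed sets has closed union (a standard fact: each point of $X$ has a neighbourhood meeting at most one $F_\alpha$, so the complement of the union is open). Thus $S_{\alpha_0}$ is open, and it contains $F_{\alpha_0}$ by pairwise disjointness of the $F_\alpha$. To verify that $\{S_\alpha\}$ is discrete I will check each point separately: at $x_0$, any neighbourhood witnessing the discreteness of $\{F_\alpha\}$ avoids every $F_\alpha$ with $\alpha\neq\alpha_0$ and hence meets no $S_\alpha$ with $\alpha\neq\alpha_0$; at an isolated point $p$, the singleton $\{p\}$ is itself a neighbourhood, and it meets $S_{\alpha_0}$ only when $p\notin\bigcup_{\alpha\neq\alpha_0}F_\alpha$, in which case it meets no $S_\alpha$ with $\alpha\neq\alpha_0$ by disjointness of the $F_\alpha$.

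The only potential obstacle is the closedness of $\bigcup_{\alpha\neq\alpha_0}F_\alpha$, but this is immediate from the definition of a discrete family, so the argument is really just bookkeeping once the structural observation about isolated points is made.
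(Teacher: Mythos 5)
Your argument is correct and follows essentially the same strategy as the paper: single out the at most one exceptional index, keep the remaining $F_\alpha$ (which are open, being sets of isolated points) as their own separating neighbourhoods, and enlarge the exceptional one to an open set. The only difference is cosmetic — the paper takes $S_{\alpha_0}=U(a)\cup F_{\alpha_0}$ for a neighbourhood $U(a)$ of the non-isolated point witnessing discreteness, whereas you take the complement of $\bigcup_{\alpha\neq\alpha_0}F_\alpha$; both work.
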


\begin{proof}
Suppose that $a$ is a non-isolated point of $X$. Fix an arbitrary discrete family $\left\{F_\alpha\right\}_{\alpha\in\mathscr{J}}$ of closed subsets of the topological space $X$.  Then there exists an open neighbourhood $U(a)$ of the point $a$ in $X$ which intersects at most one element of the family $\left\{F_\alpha\right\}_{\alpha\in\mathscr{J}}$. In the case when $U(a)\cap F_\alpha=\varnothing$ for every $\alpha\in\mathscr{J}$ we put $S_\alpha=F_\alpha$ for all $\alpha\in\mathscr{J}$. If $U(a)\cap F_{\alpha_0}\neq\varnothing$ for some $\alpha_0\in\mathscr{J}$ we put $S_{\alpha_0}=U(a)\cup F_{\alpha_0}$ and $S_\alpha=F_\alpha$ for all $\alpha\in\mathscr{J}\setminus\{\alpha_0\}$. Then $\left\{S_\alpha\right\}_{\alpha\in\mathscr{J}}$ is a discrete family of open subsets of $X$ such that $F_\alpha\subseteq S_\alpha$ for every $\alpha\in\mathscr{J}$.
\end{proof}

Propositions~\ref{proposition-3.1} and \ref{proposition-3.2} imply the following corollary.

\begin{corollary}\label{corollary-3.2-1}
Let $\lambda$ be any cardinal $\geqslant 2$ and $\tau$ be any Hausdorff topology on $P_{\lambda}$, such that $P_{\lambda}$ is a semitopological semigroup. Then the topological space $(P_{\lambda},\tau)$ is collectionwise normal.
\end{corollary}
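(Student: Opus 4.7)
The plan is to derive Corollary~\ref{corollary-3.2-1} as an immediate consequence of the two preceding propositions, with only a brief case split to handle the status of the zero element. By Proposition~\ref{proposition-3.1}, every non-zero element of $P_\lambda$ is an isolated point of $(P_\lambda,\tau)$, so the only candidate for a non-isolated point is $0$. Thus the topological space $(P_\lambda,\tau)$ has at most one non-isolated point.

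First I would dispose of the trivial case in which $0$ is also isolated. Then $(P_\lambda,\tau)$ is a discrete Hausdorff space, and every discrete $T_1$-space is collectionwise normal: given any discrete family $\{F_\alpha\}_{\alpha\in\mathscr{J}}$ of closed subsets, one simply takes $S_\alpha = F_\alpha$, since singletons are already open.

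In the remaining case, $0$ is a non-isolated point and, by Proposition~\ref{proposition-3.1}, the \emph{unique} non-isolated point of $(P_\lambda,\tau)$. Then Proposition~\ref{proposition-3.2} applies directly to $X=(P_\lambda,\tau)$ with $a=0$, yielding collectionwise normality.

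There is no real obstacle: the content of Corollary~\ref{corollary-3.2-1} is the combination of the two previous propositions, and the only detail to verify is that the two cases (zero isolated versus zero non-isolated) exhaust the possibilities allowed by Proposition~\ref{proposition-3.1}. Consequently the proof reduces to a one-line invocation together with a remark covering the degenerate discrete case; no further computation on $P_\lambda$ is needed.
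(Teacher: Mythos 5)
Your proposal is correct and follows essentially the same route as the paper, which derives the corollary directly from Propositions~\ref{proposition-3.1} and \ref{proposition-3.2}. Your extra case distinction for when $0$ is isolated (so that the space is discrete and Proposition~\ref{proposition-3.2} does not literally apply) is a small but legitimate point of care that the paper leaves implicit.
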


In \cite{Mesyan-Mitchell-Morayne-Peresse-20??} there proved that for arbitrary finite cardinal $\geqslant 2$ every Hausdorff locally compact topology $\tau$ on $P_\lambda$ such that $(P_\lambda,\tau)$ is a topological semigroup, is discrete. The following proposition extends this result for any infinite cardinal $\lambda$.

\begin{proposition}\label{proposition-3.3}
Let $\lambda$ be an infinite cardinal and $\tau$ be a locally compact Hausdorff topology on $P_\lambda$ such that $(P_\lambda,\tau)$ is a topological semigroup. Then $\tau$ is discrete.
\end{proposition}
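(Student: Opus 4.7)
The plan is to show that $0$ is an isolated point of $(P_\lambda,\tau)$, which combined with Proposition~\ref{proposition-3.1} (every non-zero element is already isolated) makes $\tau$ discrete. Suppose, for contradiction, that $0$ is not isolated, and fix a compact neighbourhood $K$ of $0$. Because every point of $D:=K\setminus\{0\}$ is isolated in $P_\lambda$, the open cover of $K$ consisting of any open $W\ni 0$ together with the singletons $\{x\}$, $x\in D$, admits a finite subcover; hence $K\setminus W$ is finite for every open $W\ni 0$, and $K$ is the one-point compactification of the discrete set $D$, which is necessarily infinite.

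The next step reduces the problem to the finite-rank polycyclic monoids treated in~\cite{Mesyan-Mitchell-Morayne-Peresse-20??}. For any distinct $i_1,\ldots,i_n\in\lambda$ with $n\geqslant 2$, the subsemigroup $S:=P_n^\lambda\langle i_1,\ldots,i_n\rangle$ is isomorphic to $P_n$ by Lemma~\ref{lemma-2.3}, and is closed in $(P_\lambda,\tau)$ since its only possible accumulation point is $0\in S$. Therefore $S$ inherits a locally compact Hausdorff topological-semigroup topology, which by the cited theorem must be discrete. Consequently there is an open $W_S\subseteq P_\lambda$ with $W_S\cap S=\{0\}$; combining with the first paragraph gives $D\cap S\subseteq K\setminus W_S$ finite for every such $S$.

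Since $D$ is infinite while meeting every finitely generated polycyclic subsemigroup in a finite set, I inductively select $(\alpha_k)_{k\in\omega}\subseteq D$ so that each $\alpha_{k+1}$ introduces a generator not used by any of $\alpha_1,\ldots,\alpha_k$ (possible because, by the previous step, only finitely many elements of $D$ live in the polycyclic monoid on the generators already used). The one-point-compactification property of $K$ forces $\alpha_k\to 0$ in $\tau$. Joint continuity of multiplication at $(0,0)$ yields an open $V\ni 0$ with $V\cdot V\subseteq\mathrm{int}(K)$, so all but finitely many $\alpha_k$ lie in $V$ and therefore $\alpha_i\cdot\alpha_j\in K$ for all sufficiently large $i,j$.

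The main obstacle is the last step: extracting a genuine contradiction from $(\alpha_k)$. The idea is to exploit the multiplication rules of the polycyclic monoid (Proposition~\ref{proposition-2.2}) together with the finite-fibre property of translations (Proposition~\ref{proposition-2.8}). Using the representation $\alpha_k=[u_k,v_k]$ with fresh generators appearing in $u_k$ or $v_k$, one forms products $\alpha_i\alpha_j$ or translates $\beta\alpha_k$ (for a fixed $\beta\in P_\lambda$) to produce an infinite family of pairwise distinct non-zero elements of $K$. The fresh-generator structure should force either that infinitely many of these outputs lie in a single finitely generated polycyclic subsemigroup $S$ --- contradicting the finiteness of $D\cap S$ established above --- or that they accumulate at a specific non-zero (hence isolated) idempotent, contradicting its isolation. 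This combinatorial conversion of the fresh-generator data into distinct non-accumulating outputs is the delicate heart of the argument.
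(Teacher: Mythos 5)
Your first three paragraphs are sound: $K$ is the one-point compactification of an infinite discrete set $D$; each finitely generated $P_n^\lambda\langle i_1,\ldots,i_n\rangle$ is closed (its only possible accumulation point is $0$, which it contains) and hence carries a locally compact Hausdorff semigroup topology that must be discrete by the finite-rank result, so $D$ meets every such subsemigroup in a finite set; and one can therefore extract a sequence $(\alpha_k)\to 0$ in which each term uses a fresh generator. The genuine gap is the final step, which you yourself flag as ``the delicate heart of the argument'' and describe only with ``the idea is'' and ``should force'': no contradiction is actually derived, and the step would fail as sketched. Take $\alpha_k$ to be the partial bijection $x_{i_k}\mathscr{M}_\lambda\to x_{j_k}\mathscr{M}_\lambda$ with all indices $i_k,j_k$ pairwise distinct. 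Then $\alpha_k\cdot\alpha_l=0$ for all $k,l$, so products of the $\alpha$'s yield nothing; and for a fixed non-zero $\beta$ the translates $\beta\cdot\alpha_k$ are either eventually $0$ or form an infinite family of pairwise distinct non-zero elements spread over infinitely many generators, whose only possible accumulation point is again $0$ --- all of which is perfectly consistent with what you have established (indeed $\{0\}\cup\{\alpha_k\colon k\in\omega\}$ sits inside a copy of $B_\omega$, and Example~\ref{example-3.4} shows exactly this kind of neighbourhood of $0$ does occur for a compact \emph{semitopological} topology on $P_\lambda$). So the contradiction must use joint continuity in a more structural way than ``fresh generators give distinct outputs,'' and that is precisely what is missing.

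For comparison, the paper's proof proceeds quite differently after the common first step: it splits on whether compact neighbourhoods of $0$ meet the $\mathscr{R}$-class $R_1$ of the identity. If some compact open $U(0)$ misses $R_1$, it uses Lemma~\ref{lemma-2.9} and Proposition~\ref{proposition-2.8} to translate an infinite subset of $U(0)$ back into $R_1$ and contradict the assumption; if every compact open $U(0)$ meets $R_1$ infinitely, it applies Koch's lemma (which requires joint continuity and local compactness) to obtain open-and-closed subsemigroup neighbourhoods of $0$, finds in one of them an element $x$ with $xx^{-1}=1\neq x^{-1}x$, and contradicts the dichotomy for the locally compact bicyclic monoid with adjoined zero: $\mathbb{B}^0(x)$ would have to be discrete yet contain the infinite compact set $\langle x\rangle\cup\{0\}$. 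Your reduction to the finite-rank case is a reasonable alternative opening, but the hard case for infinite $\lambda$ is exactly a neighbourhood of $0$ spread over infinitely many generators, and the proposal stops before handling it.
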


\begin{proof}
Suppose to the contrary that there exist a Hausdorff locally compact non-discrete semigroup topology $\tau$ on $P_\lambda$. Then by Proposition~\ref{proposition-3.1} every non-zero element the semigroup $P_{\lambda}$ is an isolated point in $(P_\lambda,\tau)$. This implies that for any compact open neighbourhoods $U(0)$ and $V(0)$ of zero $0$ in $(P_\lambda,\tau)$ the set $U(0)\setminus V(0)$ is finite. Hence zero $0$ of $P_\lambda$ is an accumulation point of any infinite subset of an arbitrary open compact neighbourhood $U(0)$ of zero in $(P_\lambda,\tau)$.

Put $R_1$ is the $\mathscr{R}$-class of the semigroup $P_{\lambda}$ which contains the identity $1$ of $P_{\lambda}$. Then only one of the following conditions holds:
\begin{itemize}
  \item[$(1)$] there exists a compact open neighbourhood $U(0)$ of zero $0$ in $(P_\lambda,\tau)$ such that $U(0)\cap R_1=\varnothing$;
  \item[$(2)$] $U(0)\cap R_1$ is an infinite set for every compact open neighbourhood $U(0)$ of zero $0$ in $(P_\lambda,\tau)$.
\end{itemize}

Suppose that case $(1)$ holds. For arbitrary $x\in R_1$ we put
\begin{equation*}
    R[x]=\left\{a\in R_1\colon x^{-1}a\in U(0)\right\}.
\end{equation*}
Next we shall show that the set $R[x]$ is finite for any $x\in R_1$. Suppose to the contrary that $R[x]$ is infinite for some $x\in R_1$. Then Lemma~\ref{lemma-2.9} implies that $x^{-1}a$ is non-zero element of $P_{\lambda}$ for every $a\in R[x]$, and hence by Proposition~\ref{proposition-2.8},
\begin{equation*}
    B=\left\{x^{-1}a\colon a\in R[x]\right\}
\end{equation*}
is an infinite subset of the neighbourhood $U(0)$. Therefore, the above arguments imply that $0\in\operatorname{cl}_{P_{\lambda}}(B)$. Now, the continuity of the semigroup operation in $(P_\lambda,\tau)$ implies that
\begin{equation*}
    0=x\cdot 0\in x\cdot\operatorname{cl}_{P_{\lambda}}(B)\subseteq \operatorname{cl}_{P_{\lambda}}(x\cdot B).
\end{equation*}
Then Lemma~\ref{lemma-2.9} implies that $xx^{-1}=1$ for any $x\in R_1$ and hence we have that
\begin{equation*}
    x\cdot B=\left\{xx^{-1}a\colon a\in R[x]\right\}=\left\{a\colon a\in R[x]\right\}=R[x]\subseteq R_1.
\end{equation*}
This implies that every open neighbourhood $U(0)$ of zero $0$ in $(P_\lambda,\tau)$ contains infinitely many elements from the class $R_1$, which contradicts our assumption.

Suppose that case $(2)$ holds. Then the set $\{0\}$ is a compact minimal ideal of the topological semigroup $(P_\lambda,\tau)$. Now, by Lemma~1 of \cite{Koch-1957} (also see \cite[Vol.~1, Lemma~3,12]{Carruth-Hildebrant-Koch-1983-1986}) for every open neighbourhood $W(0)$ of zero $0$ in $(P_\lambda,\tau)$ there exists an open neighbourhood $O(0)$ of zero $0$ in $(P_\lambda,\tau)$ such that $O(0)\subseteq W(0)$ and $O(0)$ is an ideal of $\operatorname{cl}_{P_{\lambda}}(O(0))$, i.e., $O(0)\cdot \operatorname{cl}_{P_{\lambda}}(O(0))\cup\operatorname{cl}_{P_{\lambda}}(O(0))\cdot O(0)\subseteq O(0)$. But by Proposition~\ref{proposition-3.1} all non-zero elements of $P_{\lambda}$ are isolated points in $(P_\lambda,\tau)$, and hence  we have that $\operatorname{cl}_{P_{\lambda}}(O(0))=O(0)$. This implies that $O(0)$ is an open-and-closed subsemigroup of the topological semigroup $(P_\lambda,\tau)$. Therefore, the topological $\lambda$-polycyclic monoid $(P_\lambda,\tau)$ has a base $\mathscr{B}(0)$ at zero $0$ which consists of open-and-closed subsemigroups of $(P_\lambda,\tau)$. Fix an arbitrary $S\in\mathscr{B}(0)$. Then our assumption implies that there exists $x\in S\cap R_1$. Since $x\in R_1$, Lemma~\ref{lemma-2.9} implies that $xx^{-1}=1$. Without loss of generality we may assume that $x^{-1}x\neq 1$, because $S$ is a proper ideal of $P_{\lambda}$. Put $\mathbb{B}(x)=\left\langle x,x^{-1}\right\rangle$. Then Lemma~1.31 of \cite{Clifford-Preston-1961-1967} implies that $\mathbb{B}(x)$ is isomorphic to the bicyclic monoid, and since by Proposition~\ref{proposition-3.1} all non-zero elements of $P_{\lambda}$ are isolated points in $(P_\lambda,\tau)$, $\mathbb{B}^0(x)=\mathbb{B}(x)\sqcup\{0\}$ is a closed subsemigroup of the topological semigroup $(P_\lambda,\tau)$, and hence by Corollary~3.3.10 of \cite{Engelking-1989}, $\mathbb{B}^0(x)$ with the induced topology $\tau_{\mathbb{B}}$ from $(P_\lambda,\tau)$ is a Hausdorff locally compact topological semigroup. Also, the above presented arguments imply that $\left\langle x\right\rangle\cup\{0\}$ with the induced topology from $(P_\lambda,\tau)$ is a compact topological semigroup, which is contained in $\mathbb{B}^0(x)$ as a subsemigroup. But by Corollary~1 from \cite{Gutik-2015}, $(\mathbb{B}^0(x),\tau_{\mathbb{B}})$ is the discrete space, which contains a compact infinite subspace $\left\langle x\right\rangle\cup\{0\}$. Hence case $(2)$ does not hold.

The presented above arguments imply that there exists no non-discrete Hausdorff locally compact semigroup topology on the $\lambda$-polycyclic monoid  $P_\lambda$.
\end{proof}

The following example shows that the statements of Proposition~\ref{proposition-3.3} does not extend in the case when $(P_\lambda,\tau)$ is a semitopological semigroup with continuous inversion. Moreover there exists a compact Hausdorff topology $\tau_{\textsf{A-c}}$ on $P_\lambda$ such that $(P_\lambda,\tau_{\textsf{A-c}})$ is semitopological inverse semigroup with continuous inversion.

\begin{example}\label{example-3.4}
Let $\lambda$ is any cardinal $\geqslant 2$. Put $\tau_{\textsf{A-c}}$ is the topology of the one-point Alexandroff compactification of the discrete space $P_\lambda\setminus\{0\}$ with the narrow $\{0\}$, where $0$ is the zero of the $\lambda$-polycyclic monoid $P_\lambda$. Since $P_\lambda\setminus\{0\}$ is a discrete open subspace of $(P_\lambda,\tau_{\textsf{A-c}})$, it is complete to show that the semigroup operation is separately continuous in $(P_\lambda,\tau_{\textsf{A-c}})$ in the following two cases:
\begin{equation*}
    x\cdot 0 \qquad \hbox{and} \qquad 0\cdot x,
\end{equation*}
where $x$ is an arbitrary non-zero element of the semigroup $P_\lambda$. Fix an arbitrary open neighbourhood $U_A(0)$ of the zero in $(P_\lambda,\tau_{\textsf{A-c}})$ such that $A=P_\lambda\setminus U_A(0)$ is a finite subset of $P_\lambda$. By Proposition~\ref{proposition-2.8},
\begin{equation*}
    R_x^A=\left\{a\in P_\lambda\colon x\cdot a\in A \right\} \qquad \hbox{and} \qquad L_x^A=\left\{a\in P_\lambda\colon a\cdot x\in A \right\}
\end{equation*}
are finite not necessary non-empty subsets of the semigroup $P_\lambda$. Put $U_{R_x^A}(0)=P_\lambda\setminus R_x^A$, $U_{L_x^A}(0)=P_\lambda\setminus L_x^A$ and $U_{A^{-1}}=P_\lambda\setminus\{a\colon a^{-1}\in A\}$. Then we get that
\begin{equation*}
    x\cdot U_{R_x^A}(0)\subseteq U_A(0),  \qquad U_{L_x^A}(0)\cdot x\subseteq U_A(0)\qquad \hbox{and} \qquad \left(U_{A^{-1}}\right)^{-1}\subseteq U_A(0),
\end{equation*}
and hence the semigroup operation is separately continuous and the inversion is continuous in $(P_\lambda,\tau_{\textsf{A-c}})$.
\end{example}

\begin{proposition}\label{proposition-3.5}
Let $\lambda$ is any cardinal $\geqslant 2$ and $\tau$ be a Hausdorff topology on $P_\lambda$ such that $(P_\lambda,\tau)$ is a semitopological semigroup. Then the following conditions are equivalent:
\begin{itemize}
  \item[$(i)$] $\tau=\tau_{\textsf{A-c}}$;
  \item[$(ii)$] $(P_\lambda,\tau)$ is a compact semitopological semigroup;
  \item[$(iii)$] $(P_\lambda,\tau)$ is a feebly compact semitopological semigroup.
\end{itemize}
\end{proposition}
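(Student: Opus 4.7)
The plan is to establish the cycle (i) $\Rightarrow$ (ii) $\Rightarrow$ (iii) $\Rightarrow$ (i). The first two implications are routine: $\tau_{\textsf{A-c}}$ is by construction a compact Hausdorff topology, and every compact Hausdorff space is feebly compact, since any locally finite family of non-empty open subsets of a compact space is finite.

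The substantial implication is (iii) $\Rightarrow$ (i). First I would invoke Proposition~\ref{proposition-3.1} to conclude that every non-zero element of $P_\lambda$ is an isolated point of $(P_\lambda,\tau)$, so that $P_\lambda\setminus\{0\}$ is a discrete open subspace of $(P_\lambda,\tau)$. The key step is then to show that every open neighbourhood of $0$ in $(P_\lambda,\tau)$ has finite complement in $P_\lambda$. Suppose for contradiction that there exists an open neighbourhood $U(0)$ of $0$ such that $F=P_\lambda\setminus U(0)$ is infinite. Since $0\in U(0)$, the set $F$ is contained in $P_\lambda\setminus\{0\}$, so every element of $F$ is an isolated point of $(P_\lambda,\tau)$. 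Consider the family
\begin{equation*}
\mathscr{F}=\left\{\{x\}\colon x\in F\right\}
\end{equation*}
of non-empty open subsets of $(P_\lambda,\tau)$. I claim $\mathscr{F}$ is locally finite: the open set $U(0)$ is a neighbourhood of $0$ that is disjoint from every element of $\mathscr{F}$, and for any $y\in P_\lambda\setminus\{0\}$ the singleton $\{y\}$ is an open neighbourhood of $y$ that meets at most one element of $\mathscr{F}$. Hence $\mathscr{F}$ is an infinite locally finite family of non-empty open subsets of $(P_\lambda,\tau)$, contradicting the feeble compactness of $(P_\lambda,\tau)$.

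Having established that every $\tau$-open neighbourhood of $0$ is cofinite in $P_\lambda$, the identification $\tau=\tau_{\textsf{A-c}}$ is a straightforward comparison of neighbourhood bases. The two topologies agree on the discrete part $P_\lambda\setminus\{0\}$. At the point $0$ one direction is exactly what we just proved; conversely, the Hausdorffness of $\tau$ makes each singleton $\{x\}$ with $x\neq 0$ closed in $(P_\lambda,\tau)$, so for any finite subset $E\subseteq P_\lambda\setminus\{0\}$ the cofinite set
\begin{equation*}
P_\lambda\setminus E=\bigcap_{x\in E}\bigl(P_\lambda\setminus\{x\}\bigr)
\end{equation*}
is $\tau$-open, which shows that every $\tau_{\textsf{A-c}}$-open neighbourhood of $0$ is also $\tau$-open. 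Therefore $\tau=\tau_{\textsf{A-c}}$.

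The main obstacle is recognising and exploiting the locally finite family $\mathscr{F}$ of singleton sets that arises as soon as some neighbourhood of $0$ has infinite complement; once this observation is made, feeble compactness immediately yields the finite-complement property that characterises the one-point compactification topology $\tau_{\textsf{A-c}}$.
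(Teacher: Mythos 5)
Your proposal is correct and follows essentially the same route as the paper: the heart of the argument in both is that if some neighbourhood of $0$ had infinite complement $F$, then the singletons of points of $F$ (all isolated by Proposition~\ref{proposition-3.1}) would form an infinite locally finite family of non-empty open sets, contradicting feeble compactness. The only cosmetic difference is that you prove $(iii)\Rightarrow(i)$ directly (spelling out the comparison of neighbourhood bases), whereas the paper proves $(iii)\Rightarrow(ii)$ and obtains $(ii)\Rightarrow(i)$ from Proposition~\ref{proposition-3.1}.
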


\begin{proof}
Implications $(i)\Rightarrow(ii)$ and $(ii)\Rightarrow(iii)$ are trivial and implication $(ii)\Rightarrow(i)$ follows from Pro\-position~\ref{proposition-3.1}.

$(iii)\Rightarrow(ii)$ Suppose there exists a feebly compact Hausdorff topology $\tau$ on $P_\lambda$ such that $(P_\lambda,\tau)$ is a non-compact semitopological semigroup. Then there exists an open cover $\left\{U_\alpha\right\}_{\alpha\in\mathscr{J}}$ which does not contain a finite subcover. Let $U_{\alpha_0}$ be an arbitrary element of the family $\left\{U_\alpha\right\}_{\alpha\in\mathscr{J}}$ which contains zero $0$ of the semigroup $P_\lambda$. Then $P_\lambda\setminus U_{\alpha_0}=A_{U_{\alpha_0}}$ is an infinite subset of $P_\lambda$. By Proposition~\ref{proposition-3.1}, $\left\{U_{\alpha_0}\right\}\cup\left\{\{x\}\colon x\in A_{U_{\alpha_0}}\right\}$ is an infinite locally finite family of open subset of the topological space $(P_\lambda,\tau)$, which contradicts that the space $(P_\lambda,\tau)$ is feebly compact. The obtained contradiction implies the requested implication.
\end{proof}

It is well known that the closure $\operatorname{cl}_S(T)$ of an arbitrary subsemigroup $T$ in a semitopological semigroup $S$ again is a subsemigroup of $S$ (see \cite[Proposition~I.1.8(ii)]{Ruppert-1984}). The following proposition describes the structure of a narrow of the $\lambda$-polycyclic monoid $P_\lambda$ in a semitopological semigroup.

\begin{proposition}\label{proposition-3.6}
Let $\lambda$ is any cardinal $\geqslant 2$, $S$ be a Hausdorff semitopological semigroup and $P_\lambda$ is a dense subsemigroup of $S$. Then $S\setminus P_\lambda\cup\{0\}$ is a closed ideal of $S$.
\end{proposition}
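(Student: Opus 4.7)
Write $I:=(S\setminus P_\lambda)\cup\{0\}$, so that $S\setminus I=P_\lambda\setminus\{0\}$. The plan is to verify in turn that $0$ is a two-sided zero in $S$, that $I$ is closed, and that $I$ absorbs all products in $S$.

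The first point is immediate from density and separate continuity: for $s\in S$, pick a net $(p_\alpha)\subseteq P_\lambda$ with $p_\alpha\to s$; since $0\cdot p_\alpha=p_\alpha\cdot 0=0$ in $P_\lambda$, separate continuity of the translations by $0$ gives $0\cdot s=s\cdot 0=0$. For closedness, I would upgrade Proposition~\ref{proposition-3.1} from the subspace $P_\lambda$ to the ambient space $S$. The subspace $P_\lambda\subseteq S$ inherits a Hausdorff semitopological semigroup structure, so Proposition~\ref{proposition-3.1} makes every non-zero $x\in P_\lambda$ isolated in the subspace topology: choose $U\subseteq S$ open with $U\cap P_\lambda=\{x\}$. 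Since $S$ is Hausdorff, $U\setminus\{x\}$ is open in $S$, it avoids $P_\lambda$, and density of $P_\lambda$ in $S$ forces it to be empty; hence $\{x\}$ is already open in $S$. Therefore $P_\lambda\setminus\{0\}$ is open in $S$, and $I$ is its closed complement.

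For the ideal property I would argue by contradiction. Assume $a\in I$, $b\in S$, and $ab=c\in P_\lambda\setminus\{0\}$. The absorbing property of $0$ forces $a\ne 0$ and $b\ne 0$, so in particular $a\in S\setminus P_\lambda$. By the previous paragraph $\{c\}$ is open in $S$, so separate continuity shows $U:=\{x\in S\colon xb=c\}$ is an open neighbourhood of $a$. If $b\in P_\lambda$, Proposition~\ref{proposition-2.8} bounds $U\cap P_\lambda$ by a finite (hence, in Hausdorff $S$, closed) set, and then $U\setminus(U\cap P_\lambda)$ is a non-empty open subset of $S$ disjoint from $P_\lambda$, contradicting density. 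If instead $b\in S\setminus P_\lambda$, density supplies some $p\in U\cap P_\lambda$ with $pb=c$, forcing $p\ne 0$; I then repeat the trick on $V:=\{y\in S\colon py=c\}$, an open neighbourhood of $b$ whose intersection with $P_\lambda$ is again finite by Proposition~\ref{proposition-2.8}. Once more $V\setminus(V\cap P_\lambda)$ is a non-empty open subset of $S$ avoiding $P_\lambda$, contradicting density. The argument for $ba\in I$ is symmetric, so $I$ is a two-sided ideal.

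The main obstacle is the last subcase, where neither $a$ nor $b$ lies in $P_\lambda$: a single appeal to Proposition~\ref{proposition-2.8} does not suffice because the finiteness of factorisations there is a statement about $P_\lambda$ only. The remedy is to chain density with the isolation of non-zero elements, first replacing the exterior factor $a$ by some $p\in P_\lambda$ with $pb=c$, and then applying Proposition~\ref{proposition-2.8} inside $P_\lambda$ to the equation $py=c$ to contradict density once more.
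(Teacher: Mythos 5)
Your proof is correct and follows essentially the same route as the paper's: openness of $P_\lambda\setminus\{0\}$ in $S$ via Proposition~\ref{proposition-3.1} and density, then the finiteness of solution sets from Proposition~\ref{proposition-2.8} against the fact that any neighbourhood of a point of $S\setminus P_\lambda$ meets $P_\lambda$ in an infinite set. The only cosmetic difference is that where you rerun the finiteness argument after replacing the exterior factor by some $p\in P_\lambda$, the paper instead reduces that subcase to the already-established inclusion $S\cdot I\subseteq I$.
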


\begin{proof}
First we observe by Proposition~I.1.8(iii) from \cite{Ruppert-1984} the zero $0$ of the $\lambda$-polycyclic monoid $P_\lambda$ is a zero of the semitopological semigroup $S$. Hence the statement of the proposition is trivial when $S\setminus P_\lambda=\varnothing$.

Assume that $S\setminus P_\lambda\neq\varnothing$. Put $I=S\setminus P_\lambda\cup\{0\}$. By Theorem~3.3.9 of \cite{Engelking-1989}, $I$ is a closed subspace of $S$. Suppose to the contrary that $I$ is not an ideal of $S$. If $I\cdot S\nsubseteq I$ then there exist $x\in I\setminus\{0\}$ and $y\in P_\lambda\setminus\{0\}$ such that $x\cdot y=z\in P_\lambda\setminus\{0\}$. By Theorem~3.3.9 of \cite{Engelking-1989}, $y$ and $z$ are isolated points of the topological space $S$. Then the separate continuity of the semigroup operation in $S$ implies that there exists an open neighbourhood $U(x)$ of the point $x$ in $S$ such that $U(x)\cdot\{y\}=\{z\}$. Then we get that $|U(x)\cap P_\lambda|\geqslant\omega$ which contradicts Proposition~\ref{proposition-2.8}. The obtained contradiction implies the inclusion $I\cdot S\subseteq I$. The proof of the inclusion $S\cdot I\subseteq I$ is similar.

Now we shall show that $I\cdot I\subseteq I$. Suppose to the contrary that there exist $x,y\in I\setminus\{0\}$ such that $x\cdot y=z\in P_\lambda\setminus\{0\}$. By Theorem~3.3.9 of \cite{Engelking-1989}, $z$ is an isolated point of the topological space $S$. Then the separate continuity of the semigroup operation in $S$ implies that there exists an open neighbourhood $U(x)$ of the point $x$ in $S$ such that $U(x)\cdot\{y\}=\{z\}$. Since $|U(x)\cap P_\lambda|\geqslant\omega$ there exists $a\in P_\lambda\setminus\{0\}$ such that $a\cdot y\in a\cdot I\nsubseteq I$ which contradicts the above part of our proof. The obtained contradiction implies the statement of the proposition.
\end{proof}


\section{Embeddings of the $\lambda$-polycyclic monoid into compact-like topological semigroups}

By Theorem~5 of \cite{Gutik-Pavlyk-Reiter-2009} the semigroup of $\omega{\times}\omega$-matrix units does not embed into any countably compact topological semigroup. Then by Proposition~\ref{proposition-2.6} we have that for every cardinal $\lambda\geqslant 2$ the $\lambda$-polycyclic monoid $P_\lambda$ does not embed into any countably compact topological semigroup too.

A homomorphism $\mathfrak{h}$ from a semigroup $S$ into a semigroup $T$ is called \emph{annihilating} if there exists $c\in T$ such that $(s)\mathfrak{h}=c$ for all $s\in S$. By Theorem~6 of \cite{Gutik-Pavlyk-Reiter-2009} every continuous homomorphism from the semigroup of $\omega{\times}\omega$-matrix units into an arbitrary countably compact topological semigroup is annihilating. Then since by Theorem~\ref{theorem-2.5} the semigroup $P_\lambda$ is congruence-free Theorem~6 of \cite{Gutik-Pavlyk-Reiter-2009} and Theorem~\ref{theorem-2.5} imply the following corollary.

\begin{corollary}\label{corollary-4.1}
For every cardinal $\lambda\geqslant 2$ any continuous homomorphism from a topological semigroup $P_\lambda$ into an arbitrary countably compact topological semigroup is annihilating.
\end{corollary}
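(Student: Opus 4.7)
The plan is to combine the congruence-freeness of $P_\lambda$ (Theorem~\ref{theorem-2.5}) with the embedding $B_\omega\hookrightarrow P_\lambda$ provided by Proposition~\ref{proposition-2.6} and the cited Theorem~6 of \cite{Gutik-Pavlyk-Reiter-2009}. The key observation is that the kernel of any homomorphism out of a congruence-free semigroup is either trivial or total, so it suffices to produce a single pair of distinct elements that are identified by the homomorphism.

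More precisely, let $\mathfrak{h}\colon P_\lambda\to T$ be a continuous homomorphism into a countably compact topological semigroup $T$. First I would invoke Proposition~\ref{proposition-2.6} to fix an isomorphic copy of the semigroup of $\omega{\times}\omega$-matrix units $B_\omega$ inside $P_\lambda$; call the inclusion $\iota\colon B_\omega\hookrightarrow P_\lambda$. The composition $\mathfrak{h}\circ\iota\colon B_\omega\to T$ is a continuous semigroup homomorphism from $B_\omega$ into a countably compact topological semigroup, so Theorem~6 of \cite{Gutik-Pavlyk-Reiter-2009} applies and yields some $c\in T$ with $(\mathfrak{h}\circ\iota)(s)=c$ for every $s\in B_\omega$. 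In particular, any two distinct elements of $B_\omega\subseteq P_\lambda$ are collapsed by $\mathfrak{h}$.

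Next I would pass to the kernel congruence $\mathfrak{C}_{\mathfrak{h}}=\{(x,y)\in P_\lambda\times P_\lambda\colon (x)\mathfrak{h}=(y)\mathfrak{h}\}$. Since $B_\omega$ is infinite (indeed non-trivial), the previous step shows $\mathfrak{C}_{\mathfrak{h}}$ is strictly larger than the identity congruence on $P_\lambda$. But by Theorem~\ref{theorem-2.5} the $\lambda$-polycyclic monoid $P_\lambda$ is congruence-free, so $\mathfrak{C}_{\mathfrak{h}}$ must be the universal congruence $P_\lambda\times P_\lambda$. Equivalently, $\mathfrak{h}$ takes a single value on all of $P_\lambda$, which is exactly the statement that $\mathfrak{h}$ is annihilating.

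I do not expect any serious obstacle: the only things to check are that the restriction of a continuous homomorphism to a subsemigroup remains a continuous homomorphism (immediate) and that $B_\omega$ contains at least two distinct elements (obvious). The argument is really a packaging of the two previously established facts -- the embedding $B_\omega\hookrightarrow P_\lambda$ and the congruence-freeness of $P_\lambda$ -- together with the cited result for $B_\omega$.
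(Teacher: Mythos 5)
Your proposal is correct and follows essentially the same route as the paper: restrict the homomorphism to the copy of $B_\omega$ supplied by Proposition~\ref{proposition-2.6}, apply Theorem~6 of \cite{Gutik-Pavlyk-Reiter-2009} to collapse that copy, and then use the congruence-freeness from Theorem~\ref{theorem-2.5} to conclude that the kernel congruence is universal. The paper states this argument only in the sentence preceding the corollary; your write-up just makes the same steps explicit.
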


\begin{proposition}\label{proposition-4.2}
For every cardinal $\lambda\geqslant 2$ any continuous homomorphism from a topological semigroup $P_\lambda$ into a topological semigroup $S$ such that $S\times S$ is a Tychonoff pseudocompact space is annihilating, and hence $S$ does not contain the $\lambda$-polycyclic monoid $P_{\lambda}$.
\end{proposition}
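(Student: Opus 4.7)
The strategy is to enlarge the codomain $S$ to a compact Hausdorff topological semigroup and then apply Corollary~\ref{corollary-4.1}. The key classical fact invoked is Glicksberg's theorem: if $X$ and $Y$ are Tychonoff spaces whose product $X\times Y$ is pseudocompact, then the canonical continuous injection $\beta(X\times Y)\to\beta X\times\beta Y$ is a homeomorphism. Applying this with $X=Y=S$ (and noting that $S$ itself is Tychonoff as a subspace of $S\times S$, and pseudocompact as a continuous image of $S\times S$), one obtains $\beta(S\times S)=\beta S\times\beta S$.

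Using this identification, the continuous multiplication $m\colon S\times S\to S$ composed with the inclusion $\iota_S\colon S\hookrightarrow\beta S$ is a continuous map from $S\times S$ into the compact Hausdorff space $\beta S$, and therefore extends uniquely to a continuous map $\widetilde{m}\colon\beta S\times\beta S\to\beta S$. Since $\iota_S(S)$ is dense in $\beta S$, associativity of $m$ together with continuity of $\widetilde{m}$ transfers to associativity of $\widetilde{m}$, so $(\beta S,\widetilde{m})$ is a compact topological semigroup, and $\iota_S$ is a continuous homomorphism onto a dense subsemigroup.

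Given any continuous homomorphism $h\colon P_\lambda\to S$, the composition $\iota_S\circ h\colon P_\lambda\to\beta S$ is then a continuous homomorphism from $P_\lambda$ into a compact, and hence countably compact, topological semigroup. By Corollary~\ref{corollary-4.1}, $\iota_S\circ h$ is annihilating, i.e., constant with some value $c\in\beta S$. As $\iota_S$ is injective and the image of $\iota_S\circ h$ lies in $\iota_S(S)$, the point $c$ belongs to $\iota_S(S)$ and $h$ itself is constant; this shows that every continuous homomorphism $P_\lambda\to S$ is annihilating. The second assertion is immediate: if $P_\lambda$ were contained in $S$ as a subsemigroup via a topological embedding, the embedding would be an injective continuous homomorphism $P_\lambda\to S$, which by the first part must be constant --- impossible since $|P_\lambda|\geqslant 2$.

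The main obstacle is the first step: invoking Glicksberg's theorem and rigorously extending $m$ to a jointly continuous associative multiplication on $\beta S$. Everything afterwards is a direct reduction to Corollary~\ref{corollary-4.1} together with the congruence-freeness (Theorem~\ref{theorem-2.5}) that was already implicit in obtaining Corollary~\ref{corollary-4.1}.
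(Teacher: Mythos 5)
Your proof is correct and follows essentially the same route as the paper: both arguments use the pseudocompactness of $S\times S$ to extend the multiplication of $S$ to a compact topological semigroup structure on $\beta S$ and then reduce to Corollary~\ref{corollary-4.1}. The only difference is that the paper cites this extension result as a black box (Theorem~1.3 of \cite{Banakh-Dimitrova-2010}) whereas you re-derive it from Glicksberg's theorem, and you obtain the annihilating statement first and deduce non-containment from it, while the paper argues in the opposite order using congruence-freeness; both orderings are valid.
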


\begin{proof}
First we shall show that $S$ does not contain the $\lambda$-polycyclic monoid $P_{\lambda}$. By \cite[Theorem 1.3]{Banakh-Dimitrova-2010} for any topological semigroup S with the pseudocompact square $S\times S$ the semigroup operation $\mu\colon S\times S\rightarrow S$ extends to a continuous semigroup operation $\beta\mu\colon \beta S\times \beta S\rightarrow \beta S$, so $S$ is a subsemigroup of the compact topological semigroup $\beta S$. Therefore the $\lambda$-polycyclic monoid $P_{\lambda}$ is a subsemigroup of compact topological semigroup $\beta $S which contradicts Corollary~\ref{corollary-4.1}. The first statement of the proposition implies from the statement that $P_{\lambda}$ is a congruence-free semigroup.
\end{proof}

Recall~\cite{DeLeeuw-Glicksberg-1961} that a {\it Bohr compactification of a topological semigroup $S$} is a~pair $(\beta, B(S))$ such that $B(S)$ is a compact topological semigroup, $\beta\colon S\to B(S)$ is a continuous homomorphism, and if $g\colon S\to T$ is a continuous homomorphism of $S$ into a compact semigroup $T$, then there exists a unique continuous homomorphism $f\colon B(S)\to T$ such that the diagram
\begin{equation*}
\xymatrix{
S\ar[r]^\beta\ar[d]_g & B(S)\ar[ld]^f\\ T
}
\end{equation*}
commutes.

By Theorem~\ref{theorem-2.5} for every infinite cardinal $\lambda$ the polycyclic monoid $P_{\lambda}$ is a congruence-free inverse semigroup and hence Corollary~\ref{corollary-4.1} implies the following corollary.

\begin{corollary}\label{corollary-4.3}
For every cardinal $\lambda\geqslant 2$ the Bohr compactification of a topological $\lambda$-polycyclic monoid $P_{\lambda}$ is a trivial semigroup.
\end{corollary}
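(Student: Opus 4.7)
The plan is to combine the universal property defining the Bohr compactification with Corollary~\ref{corollary-4.1} to show that a one-point compact semigroup $T_0=\{c\}$, together with the constant map $\beta'\colon P_\lambda\to T_0$, already satisfies the universal property of the Bohr compactification of $P_\lambda$; since the Bohr compactification is unique up to topological isomorphism, the ``official'' $B(P_\lambda)$ must then also be a one-point semigroup.

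First I would verify, by a standard diagram-chase, that any two pairs $(\beta_1,B_1)$ and $(\beta_2,B_2)$ satisfying the universal property are canonically topologically isomorphic. Applying the universal property in both directions produces continuous homomorphisms $\phi\colon B_1\to B_2$ and $\psi\colon B_2\to B_1$ with $\phi\circ\beta_1=\beta_2$ and $\psi\circ\beta_2=\beta_1$; the uniqueness clauses applied to the identity maps then force $\psi\circ\phi=\operatorname{id}_{B_1}$ and $\phi\circ\psi=\operatorname{id}_{B_2}$.

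Next I would exhibit $(\beta',T_0)$ as a Bohr compactification of $P_\lambda$. For any continuous homomorphism $g\colon P_\lambda\to T$ into a compact topological semigroup $T$, compactness implies countable compactness, so Corollary~\ref{corollary-4.1} yields an element $d\in T$ with $g(x)=d$ for every $x\in P_\lambda$; the identity $d=g(x\cdot x)=g(x)\cdot g(x)=d\cdot d$ shows that $d$ is idempotent. Defining $f\colon T_0\to T$ by $f(c)=d$ produces a continuous homomorphism (trivially continuous, and $f(c\cdot c)=f(c)=d\cdot d=f(c)\cdot f(c)$) satisfying $f\circ\beta'=g$, and it is the unique such map simply because $|T_0|=1$. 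Combining the two steps yields $B(P_\lambda)\cong T_0$.

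The proof is a formal consequence of Corollary~\ref{corollary-4.1} and the universal property, with no substantive obstacle. The only subtlety worth flagging is that the definition of Bohr compactification recalled in the paper is purely by universal property, with no explicit density condition on $\beta(P_\lambda)$; so triviality of $B(P_\lambda)$ must be deduced via the uniqueness argument above rather than by asserting $B(P_\lambda)=\operatorname{cl}_{B(P_\lambda)}(\beta(P_\lambda))$.
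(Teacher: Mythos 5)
Your argument is correct and takes essentially the same route as the paper, whose entire proof is the observation that Corollary~\ref{corollary-4.1} forces every continuous homomorphism of $P_\lambda$ into a compact (hence countably compact) topological semigroup to be annihilating. Your additional step of deducing triviality of $B(P_\lambda)$ from the uniqueness clause of the universal property, rather than from an unstated density of $\beta(P_\lambda)$ in $B(P_\lambda)$, merely fills in a detail the paper leaves implicit.
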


The following theorem generalized Theorem~5 from \cite{Gutik-Pavlyk-Reiter-2009}.

\begin{theorem}\label{theorem-4.4}
For every infinite cardinal $\lambda$ the semigroup of $\lambda{\times}\lambda$-matrix units $B_{\lambda}$ does not densely embed into a Hausdorff feebly compact topological semigroup.
\end{theorem}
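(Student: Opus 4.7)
The plan is to argue by contradiction. Suppose $B_\lambda$ sits densely in a Hausdorff feebly compact topological semigroup $S$. I would first establish, by mimicking the proof of Proposition~\ref{proposition-3.1} (with the matrix-unit idempotents $e_a=(a,a)$ playing the role of the identity of $P_\lambda$) and then promoting isolation from the subspace to $S$ via density plus Hausdorffness, that every non-zero element of $B_\lambda$ is an isolated point of $S$. Consequently $B_\lambda\setminus\{0\}$ is an open, dense, discrete subspace of $S$.

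Second, I would check that the subspace topology $\tau'$ on $B_\lambda$ inherited from $S$ is not feebly compact. Indeed $(B_\lambda,\tau')$ is a Hausdorff topological semigroup, while the Gutik--Pavlyk result~\cite{Gutik-Pavlyk-2005a} identifies the one-point Alexandroff compactification $\tau_A$ as the unique Hausdorff feebly compact semitopological semigroup topology on an infinite $B_\lambda$; and $\tau_A$ is \emph{not} a topological semigroup topology on $B_\lambda$ (given $(a,c)\in B_\lambda$ and cofinite neighborhoods $U_1,U_2$ of $0$, one may choose $x\in\lambda$ so that $(a,x)\in U_1$ and $(x,c)\in U_2$, whose product $(a,c)$ can be placed outside any preassigned cofinite neighborhood of $0$). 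Hence $(B_\lambda,\tau')$ admits an infinite locally finite family of non-empty open subsets. An inductive construction inside $\lambda\times\lambda$ then produces from it an infinite closed discrete subset $A'=\{(c_n,d_n):n\in\omega\}\subseteq(B_\lambda\setminus\{0\},\tau')$ whose coordinate sequences $(c_n)_{n\in\omega}$ and $(d_n)_{n\in\omega}$ are injective and satisfy $\{c_n:n\in\omega\}\cap\{d_m:m\in\omega\}=\varnothing$. In particular $A'\cdot A'=\{0\}$, and for every $y\in B_\lambda$ at most one element $a\in A'$ satisfies $a\cdot y\neq 0$, and at most one satisfies $y\cdot a\neq 0$.

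Because $\{\{a\}:a\in A'\}$ is an infinite family of non-empty open subsets of the feebly compact space $S$, it cannot be locally finite. Local finiteness holds at every point of $B_\lambda$ (since $A'$ is closed discrete in $\tau'$ and each element of $A'$ is isolated in $S$), so the failure must occur at some $p\in S\setminus B_\lambda$ that is a cluster point of $A'$ in $S$. Joint continuity of the multiplication in the topological semigroup $S$, together with the fact that along the trace to $A'$ of the neighborhood filter of $p$ the values $a\cdot y$ (resp.\ $y\cdot a$) are eventually zero, yields $p\cdot y=y\cdot p=0$ for every $y\in B_\lambda$; density and continuity extend this to $pS=Sp=\{0\}$. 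Thus $p\in S\setminus B_\lambda$ is a non-zero two-sided annihilator in $S$.

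The hard part is converting this data into a genuine contradiction. The route I would pursue follows the pattern of the proof of Proposition~\ref{proposition-4.2}: first establish the $B_\lambda$-analogue of Proposition~\ref{proposition-3.6}, showing that $I:=(S\setminus B_\lambda)\cup\{0\}$ is a closed ideal of $S$; next use the open dense discrete structure of $B_\lambda\setminus\{0\}$ together with $I$ to verify that $S\times S$ is pseudocompact; then invoke the Banakh--Dimitrova extension theorem~\cite{Banakh-Dimitrova-2010} to realize $S$ as a subsemigroup of the compact topological semigroup $\beta S$. Since $B_\omega\subseteq B_\lambda\subseteq\beta S$ would place $B_\omega$ in a (countably) compact topological semigroup, this contradicts Theorem~5 of~\cite{Gutik-Pavlyk-Reiter-2009}. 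The chief obstacle is verifying pseudocompactness of $S\times S$; if that route fails, a backup plan is to iterate the construction of the third paragraph across several pairwise ``independent'' closed discrete subsets of $B_\lambda\setminus\{0\}$ (using disjoint coordinate blocks in $\lambda\times\lambda$) in order to manufacture a genuinely locally finite infinite family of open subsets of $S$, directly contradicting feeble compactness of $S$.
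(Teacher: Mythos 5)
Your argument does not close, and you say so yourself: ``the hard part is converting this data into a genuine contradiction,'' and neither of the two routes you offer for that step works as stated. Route (a) requires $S\times S$ to be pseudocompact, but feeble compactness (and even pseudocompactness) is not preserved by products, and nothing in your setup --- a feebly compact space with a dense set of isolated points --- forces the square to be pseudocompact; the Banakh--Dimitrova extension theorem is therefore not available. Route (b), iterating the construction over ``independent'' coordinate blocks, does not address the actual difficulty: any infinite discrete family of singletons from $B_\lambda\setminus\{0\}$ that you manufacture is closed and discrete only in the subspace $(B_\lambda,\tau')$, and feeble compactness of $S$ merely tells you it must cluster at some point of $S\setminus B_\lambda$ --- which is exactly the situation you already reached with $A'$ and could not turn into a contradiction. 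Producing a non-zero two-sided annihilator $p\in S\setminus B_\lambda$ is not by itself absurd, so the proof stalls at precisely the point you identify.

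The paper closes the argument with an idea absent from your proposal: a dichotomy on the band $E(B_\lambda)$ in the induced topology. Since every maximal chain in $E(B_\lambda)$ is finite, Stepp's theorem (together with the fact that the closure of a semilattice in a topological semigroup is a semilattice) shows that $E(B_\lambda)$ is \emph{closed in $S$}. Hence if $E(B_\lambda)$ is not compact, then for a suitable neighbourhood $U(0)$ of zero the set $E(B_\lambda)\setminus U(0)$ is infinite, consists of isolated points of $S$, and is closed in $S$ --- an infinite open-and-closed discrete subspace, contradicting feeble compactness directly, because closedness in $S$ rules out exactly the cluster point $p$ that defeats your construction. If instead $E(B_\lambda)$ is compact, Theorem~1 of \cite{Gutik-Pavlyk-Reiter-2009} forces $B_\lambda$ to be closed in $S$, hence equal to $S$ by density, and then Theorem~2 of \cite{Gutik-Pavlyk-2005a} (there is no feebly compact Hausdorff topological-semigroup topology on $B_\lambda$) finishes; this last step is essentially your second paragraph, but it only becomes a contradiction once you know $B_\lambda=S$. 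The missing ingredient in your write-up is thus a mechanism guaranteeing that some infinite discrete family is closed in $S$ and not merely in $B_\lambda$; the semilattice of idempotents provides exactly that.
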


\begin{proof}
Suppose to the contrary that there exists a Hausdorff feebly compact topological semigroup $S$ which contains the semigroup of $\lambda{\times}\lambda$-matrix units $B_{\lambda}$ as a dense subsemigroup.

First we shall show that the subsemigroup of idempotents $E(B_{\lambda})$ of the semigroup $\lambda{\times}\lambda$-matrix units $B_{\lambda}$ with the induced topology from $S$ is compact. Suppose to the contrary that $E(B_{\lambda})$ is not a compact subspace of $S$. Then there exists an open neighbourhood $U(0)$ of the zero $0$ of $S$ such that $E(B_{\lambda})\setminus U(0)$ is an infinite subset of $E(B_{\lambda})$.  Since the closure of semilattice in a topological semigroup is subsemilattice (see \cite[Corollary~19]{Gutik-Pavlyk-2003}) and every maximal chain of $E(B_{\lambda})$ is finite, Theorem~9 of \cite{Stepp-1975} implies that the band $E(B_{\lambda})$ is a closed subsemigroup of $S$. Now, by Lemma~2 from \cite{Gutik-Pavlyk-2005a} every non-zero element of the semigroup $B_{\lambda}$ is an isolated point in the space $S$, and hence by Theorem~3.3.9 of \cite{Engelking-1989}, $B_{\lambda}\setminus\{0\}$ is an open discrete subspace of the topological space $S$. Therefore we get that $E(B_{\lambda})\setminus U(0)$ is an infinite open-and-closed discrete subspace of $S$. This contradicts the condition that $S$ is a feebly compact space.

If the subsemigroup of idempotents $E(B_{\lambda})$ is compact then by Theorem~1 from \cite{Gutik-Pavlyk-Reiter-2009} the semigroup of $\lambda{\times}\lambda$-matrix units $B_{\lambda}$ is closed subsemigroup of $S$ and since $B_{\lambda}$ is dense in $S$, the semigroup $B_{\lambda}$ coincides with the topological semigroup $S$. This contradicts Theorem~2 of \cite{Gutik-Pavlyk-2005a} which states that there exists no a feebly compact Hausdorff topology $\tau$ on the semigroup of $\lambda{\times}\lambda$-matrix units $B_{\lambda}$ such that $(B_{\lambda},\tau)$ is a topological semigroup. The obtained contradiction implies the statement of the theorem.
\end{proof}

\begin{lemma}\label{lemma-4.5}
Every Hausdorff feebly compact topological space with a dense discrete subspace is countably pracompact.
\end{lemma}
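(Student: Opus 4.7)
The plan is to take $A$ to be the given dense discrete subspace $D$ of $X$ and prove that $X$ is countably compact at $D$, which will exhibit $X$ as countably pracompact. I proceed by contradiction: suppose $B\subseteq D$ is an infinite subset having no accumulation point in $X$. Since $X$ is Hausdorff (hence $T_1$), the usual $T_1$-argument (for each $x$, peel off the finitely many points of $B\setminus\{x\}$ lying in a neighbourhood of $x$) shows that this assumption is equivalent to: for every $x\in X$ there exists an open neighbourhood $U_x$ of $x$ in $X$ with $|U_x\cap B|<\omega$.

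Next, since $D$ is a discrete subspace of $X$, for each $b\in B\subseteq D$ I choose an open subset $V_b$ of $X$ with $V_b\cap D=\{b\}$. The first use of the density of $D$ is to observe that the family $\{V_b\}_{b\in B}$ consists of pairwise disjoint non-empty open sets: if $V_b\cap V_{b'}\neq\varnothing$ for distinct $b,b'\in B$, then the non-empty open set $V_b\cap V_{b'}$ would meet $D$, but any such common point of $D$ would have to lie in both $V_b\cap D=\{b\}$ and $V_{b'}\cap D=\{b'\}$, a contradiction.

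Finally, I claim that $\{V_b\}_{b\in B}$ is a locally finite family in $X$. Given $x\in X$, pick $U_x\ni x$ with $U_x\cap B$ finite as above. If $U_x\cap V_b\neq\varnothing$ for some $b\in B$, then density of $D$ produces a point of $D$ in the non-empty open set $U_x\cap V_b$; such a point lies in $V_b\cap D=\{b\}$, so it must be $b$ itself, and hence $b\in U_x\cap B$. Therefore $U_x$ meets at most $|U_x\cap B|<\omega$ members of the family. Since $B$ is infinite, $\{V_b\}_{b\in B}$ is an infinite locally finite family of non-empty open subsets of $X$, which directly contradicts the Hausdorff characterization of feeble compactness recorded in the preliminaries.

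The argument is short and has no substantial obstacle; the only point requiring care is invoking the density of $D$ in two separate places — first to force pairwise disjointness of the $V_b$, and second to control how many $V_b$ a given $U_x$ can meet — both of which rest on the simple fact that a non-empty open set in $X$ must intersect $D$.
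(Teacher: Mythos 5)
Your proof is correct and follows essentially the same route as the paper: assume an infinite $B\subseteq D$ with no accumulation point in $X$ and turn it into an infinite locally finite family of non-empty open subsets of $X$, contradicting feeble compactness. The only cosmetic difference is that the paper first observes that the dense discrete subspace $D$ is open in $X$, so that $B$ is clopen and discrete and the singletons $\{b\}$, $b\in B$, themselves form the required family, whereas you work with the separating neighbourhoods $V_b$ and verify local finiteness directly via the neighbourhoods $U_x$ witnessing non-accumulation.
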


\begin{proof}
Suppose to the contrary that there exists a  feebly compact topological space $X$ with a dense discrete subspace $D$ such that $X$ is not countably pracompact. Then every dense subset $A$ in the topological space $X$ contains an infinite subset $B_A$ such that $B_A$ hasn't an accumulation point in $X$. Hence the dense discrete subspace $D$ of $X$ contains an infinite subset $B_D$ such that $B_D$ hasn't an accumulation point in the topological space $X$. Then $B_D$ is a closed subset of $X$. By Theorem~3.3.9 of \cite{Engelking-1989}, $D$ is an open subspace of $X$, and hence we have that $B_D$ is a closed-and-open discrete subspace of the space $X$, which contradicts the feeble compactness of the space $S$. The obtained contradiction implies the statement of the lemma.
\end{proof}

\begin{theorem}\label{theorem-4.6}
For arbitrary cardinal $\lambda\geqslant 2$ there exists no  Hausdorff feebly compact topological semigroup which contains the $\lambda$-polycyclic monoid $P_{\lambda}$ as a dense subsemigroup.
\end{theorem}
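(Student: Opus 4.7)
The plan is a contradiction argument exploiting the embedded copy of the $\omega{\times}\omega$-matrix units inside $P_\lambda$. Suppose $S$ is a Hausdorff feebly compact topological semigroup in which $P_\lambda$ is a dense subsemigroup. By Proposition~\ref{proposition-2.6}, $P_\lambda$ contains an isomorphic copy of $B_\omega$, which we shall also denote by $B_\omega$. The goal is to show that the subspace topology inherited from $S$ makes $B_\omega$ a countably compact Hausdorff topological semigroup, which contradicts Theorem~5 of~\cite{Gutik-Pavlyk-Reiter-2009} (recalled at the start of this section).

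The first step is to observe that every non-zero element of $P_\lambda$, hence every non-zero element of $B_\omega$, is an isolated point of $S$. Indeed, the topology induced on $P_\lambda$ from $S$ is Hausdorff and turns $P_\lambda$ into a semitopological semigroup, so Proposition~\ref{proposition-3.1} gives isolatedness in $P_\lambda$; and by Theorem~3.3.9 of~\cite{Engelking-1989}, isolated points of a dense subspace of a Hausdorff space are isolated in the ambient space.

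The second step closely follows the first half of the proof of Theorem~\ref{theorem-4.4}. Since every maximal chain of $E(B_\omega)$ has length at most two, the fact that closures of semilattices in topological semigroups are semilattices together with Theorem~9 of~\cite{Stepp-1975} show that $E(B_\omega)$ is closed in $S$. If there were an open neighborhood $U(0)$ of $0$ in $S$ with $E(B_\omega)\setminus U(0)$ infinite, this set would be closed (intersection of closed sets) and open (a union of isolated singletons), so the family $\bigl\{\{e\}\colon e\in E(B_\omega)\setminus U(0)\bigr\}$ would be an infinite locally finite collection of non-empty open subsets of~$S$, violating feeble compactness. Hence $E(B_\omega)$ is compact, and then Theorem~1 of~\cite{Gutik-Pavlyk-Reiter-2009} gives that $B_\omega$ is closed in $S$.

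The final step is to upgrade this to countable compactness of $B_\omega$. For any infinite $A\subseteq B_\omega\setminus\{0\}$, if $A$ had no accumulation point in $S$, then $A$ would again be both closed (no limit points) and open (a union of its isolated singletons) in $S$, and $\bigl\{\{x\}\colon x\in A\bigr\}$ would be an infinite locally finite family of non-empty open sets, contradicting feeble compactness. Thus $A$ accumulates in $S$, and since $B_\omega$ is closed while every non-zero element is isolated, the accumulation point must be $0\in B_\omega$. Consequently $B_\omega$ with the subspace topology is a Hausdorff countably compact topological semigroup, and the identity inclusion $B_\omega\hookrightarrow B_\omega$ contradicts Theorem~5 of~\cite{Gutik-Pavlyk-Reiter-2009}. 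The main obstacle is the combinatorial bookkeeping ensuring that at each stage the relevant subset of $B_\omega$ is simultaneously closed and open in $S$, so that the locally-finite-family-of-open-sets trick applies; the remaining ingredients are readily imported from the proof of Theorem~\ref{theorem-4.4} and from Proposition~\ref{proposition-2.6}.
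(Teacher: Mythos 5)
Your argument is correct, and it reaches the contradiction through the same two pillars as the paper (Proposition~\ref{proposition-2.6} giving the embedded copy of $B_\omega$, and Proposition~\ref{proposition-3.1} giving that all non-zero points are isolated in $S$), but the reduction is genuinely different. The paper first invokes Lemma~\ref{lemma-4.5} to upgrade feeble compactness of $S$ to countable pracompactness (using the dense discrete subspace $P_\lambda\setminus\{0\}$), observes that consequently $\operatorname{cl}_S(B_\omega)$ is a countably pracompact, hence feebly compact, topological semigroup containing $B_\omega$ densely, and then quotes Theorem~\ref{theorem-4.4} as a black box. You instead bypass countable pracompactness entirely: you re-run the first half of the proof of Theorem~\ref{theorem-4.4} inside $S$ (compactness of $E(B_\omega)$ via the closed-discrete/locally-finite-family argument and Stepp's theorem, then closedness of $B_\omega$ via Theorem~1 of \cite{Gutik-Pavlyk-Reiter-2009}), and then show that $B_\omega$ itself is countably compact, contradicting Theorem~5 of \cite{Gutik-Pavlyk-Reiter-2009} directly. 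Each step checks out: the families of singletons you produce are indeed locally finite because the relevant sets are simultaneously closed (as intersections of closed sets, or as sets without accumulation points) and open (as unions of isolated singletons), and once $B_\omega$ is closed the only possible accumulation point of an infinite subset of $B_\omega\setminus\{0\}$ is $0$. The trade-off is modularity versus self-containedness: the paper's route reuses Theorem~\ref{theorem-4.4} and the general Lemma~\ref{lemma-4.5}, while yours duplicates part of that proof but avoids introducing countable pracompactness and yields the slightly sharper intermediate conclusion that the embedded $B_\omega$ would have to be a countably compact (in fact, being countable, compact) closed subsemigroup of $S$.
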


\begin{proof}
By Proposition~\ref{proposition-3.1} and Lemma~\ref{lemma-4.5} it is suffices to show that there does not exist a Hausdorff countably pracompact topological semigroup which contains the $\lambda$-polycyclic monoid $P_{\lambda}$ as a dense subsemigroup.

Suppose to the contrary that there exists a Hausdorff countably pracompact topological semigroup $S$ which contains the $\lambda$-polycyclic monoid $P_{\lambda}$ as a dense subsemigroup. Then there exists a dense subset $A$ in $S$ such that every infinite subset $B\subseteq A$ has an accumulation point in the topological space $S$. By Proposition~\ref{proposition-3.1}, $P_{\lambda}\setminus\{0\}$ is a discrete dense subspace of $S$ and hence Theorem~3.3.9 of \cite{Engelking-1989} implies that $P_{\lambda}\setminus\{0\}$ is an open subspace of $S$. Therefore we have that $P_{\lambda}\setminus\{0\}\subseteq A$. Now, by Proposition~\ref{proposition-2.6} the $\lambda$-polycyclic monoid $P_{\lambda}$ contains an isomorphic copy of the semigroup of $\omega{\times}\omega$-matrix units $B_\omega$. Then the countable pracompactness of the space $S$ implies that every infinite subset $C$ of the set $B_\omega\{0\}$ has an accumulating point in $X$, and hence the closure $\operatorname{cl}_S(B_\omega)$ is a countably pracompact subsemigroup of the topological semigroup $S$. This contradicts Theorem~\ref{theorem-4.4}. The obtained contradiction implies the statement of the theorem.
\end{proof}


\section*{Acknowledgements}

We acknowledge Alex Ravsky for his comments and suggestions.




\end{document}